\numberwithin{equation}{section}
\newtheorem{thm}{Theorem}[section]
\newtheorem{proposition}[thm]{Proposition}
\newtheorem{lemma}[thm]{Lemma}
\theoremstyle{definition}
\newtheorem{definition}[thm]{Definition}
\newtheorem*{remark*}{Remark}
\newtheorem{remark}[thm]{Remark}
\newtheorem{example}[thm]{Example}
\newcommand{\hf}{(H \cap F)}
\newcommand{\h}{\mathbf{h}}
\newcommand{\End}{\text{End}}
\newcommand{\ind}{\mathrm{Ind}}
\newcommand{\hirschhigh}[2]{\langle #1, #2 \rangle}
\title[On representations of supersolvable groups]{On characterization of monomial representations of discrete supersolvable groups}
\date{\today }
\author{E. K. Narayanan}
\address{ Department of Mathematics,
    Indian Institute of Science,
    Bangalore 560012, India }
\email{naru@math.iisc.ernet.in}
\author{Pooja Singla}
\email{pooja@math.iisc.ernet.in}
\subjclass[2010] { Primary: 20C15,
Secondary: 20F18, 22D30}
\keywords{ Monomial representations, Induced
representations, supersolvable groups, finitely generated nilpotent groups, Infinite dihedral groups, Hirsch length}
\begin{document}
\maketitle
\begin{abstract}
 We prove that an abstract (possibly infinite dimensional) complex irreducible representation of a discrete supersolvable group is monomial if and only if it has finite weight. We also prove a general result that implies converse of Schur's lemma holds true for certain induced representations of finitely generated discrete groups. At last, we work out example of infinite dihedral group and prove that it is a monomial group. 

%The example of infinite Dihedral group is particularly illuminating and is treated separately. We also discuss characterisation of any irreducible representation of a supersolvable group to be finite dimensional and discuss this and in particular discuss the case of finite dimensional irreducible representations of two-step finitely generated nilpotent groups.
%In this article we prove that any countable dimensional
%irreducible representation of finitely generated discrete
%nilpotent groups is monomial if and only if it has finite weight.
%.
%We also provide a full description of the finite dimensional
%representations of finitely generated discrete two step nilpotent
%groups whose center has rank one and thus is represented as a
%Heisenberg group.
\end{abstract}

\maketitle

\section{Introduction}
A representation of a group is called monomial if it is induced from a one dimensional representation and a group is called monomial if all of its irreducible representations are monomial.
It is a classical result that a finite nilpotent group is monomial (see Serre~\cite[Ch.8]{MR0450380}). The proof of this result boils down to the existence of a non-central normal abelian subgroup. The latter property holds for the more general class of groups called supersolvable groups. Recall a group is called supersolvable if there exists a finite invariant normal series such that each quotient is a cyclic group. Therefore it can be deduced that every irreducible representation of a finite supersolvable group is monomial (see Serre~\cite[Ch.8]{MR0450380}).

For the case of simply connected nilpotent Lie groups and complex unitary irreducible representations the analogous result appeared in 1962 in the classical work of Kirillov~\cite{Kirillov1962} (see also Dixmier~\cite{Dixmier3, Dixmier4}), where he developed the famous orbit method to construct irreducible representations of simply connected nilpotent Lie groups. It turns out that every unitary irreducible representation of a simply connected
connected nilpotent Lie group is induced from a unitary character of a subgroup. In the same article, Kirillov mentioned that many of his proofs work for a more general class of simply connected completely solvable Lie groups, that is, the groups that have an invariant normal series such that each quotient is a one dimensional Lie group. In particular every simply connected completely solvable Lie groups is monomial. For results regarding extension of orbit method to completely solvable Lie groups, we refer the reader to Lipsman~\cite{Lipsman(1)90}. The orbit method of Kirillov can be extended to more general setting of exponentially solvable Lie groups (see Fujiwara~\cite{Fujiwara90, Fujiwara2015}). As a consequence one obtains that exponential solvable Lie groups are monomial (see Bernat et al.~\cite{Bernat}, Takenouchi~\cite{Takenouchi}).

On the other hand, the complex irreducible unitary representations of finitely generated discrete nilpotent groups are not necessarily monomial. For example, I.D.Brown~\cite{MR0352324} constructed an irreducible unitary representation of discrete Heisenberg group  that is not monomial. Brown~\cite{MR0352324} also proved that a unitary irreducible representations of a discrete nilpotent group is monomial if and only if it has finite weight. Recall that a representation $\rho$ of $G$ is said to have finite weight if there exists a subgroup $H$ of $G$ and a character $\chi$ of $H$ such that space of $H$-linear maps, also called the space of intertwining operators, $\mathrm{Hom}_H(\rho|_{H}, \chi)$ is non-zero and finite dimensional.

Brown's result~\cite{MR0352324} motivates the question whether a similar characterization holds for irreducible representations (unitary
or otherwise) of finitely generated discrete nilpotent groups. That is, if $G$ is such a group and $\pi$ is an irreducible representation (not necessarily
unitary) of $G,$ then is it true that $\pi$ is equivalent to $\ind_{H}^G(\chi)$ for some character $\chi$ of a subgroup $H$ if and only if $\pi$ has
finite weight (see the conjecture by Parshin in \cite{ParshinICM}, see also~\cite{Arnal-Parshin}). We remark that, by $\ind_H^G(\chi)$ we mean the finite induction (see Definition~\ref{def:finite-induction}). Very recently Beloshapka-Gorchinskiy~\cite{Beloshapka-Gorchinskiy} proved this conjecture for finitely generated discrete nilpotent groups. See also \cite{Nilpotent-1} by authors for a slightly different approach to this problem.

%The next natural step for discrete nilpotent groups was to consider representations that are not necessarily unitary.  In this case by the induction of a representation, we mean finite induction (See Definition~\ref{def:finite-induction}). It was proved by \footnote{references here?} that not every such irreducible representation is monomial. Parshin~\cite{ParshinICM}, during plenary lecture at ICM 2010, conjectured that Brown's characterization of monomial representations holds for all complex irreducible representations of finitely generated discrete nilpotent groups (See also \cite{Arnal-Parshin}).

In this paper our aim is to extend the characterization of monomial representations given by Brown~\cite{MR0352324} to the class of discrete supersolvable groups.
It is not very difficult to prove that every finite dimensional representation of a discrete supersolvable group is monomial (see Proposition~\ref{prop: finite-dimensional}). So our main attention will be on infinite dimensional representations. Note that a nilpotent group is supersolvable if and only if it is finitely generated. Hence our results extend the analogous results for finitely generated nilpotent groups. Below we describe the main results of this article.

Let $(\pi, V)$ be an irreducible representation of discrete supersolvable group $G$. Let $H$ be a subgroup of $G$ and $\chi$ be a character of $H$ such that the following subspace of $V$ is non-zero and finite dimensional.
\[
V_H(\chi) = \{ v \in V \mid \pi(h) v = \chi(h)v \,\, \forall \,\, h \in H \}.
\]
In this case, we say $\pi$ has {\it finite weight} with respect to $(H, \chi)$. In case $(\pi, V)$ is an irreducible representation of $G$ with finite weight, then our aim is to find
a subgroup $H'$ of $G$ and a character $\chi'$ of $H'$ such that $\pi \cong \ind_{H'}^G(\chi')$. The plan of the paper is as follows: In section~\ref{sec: preliminaries}, we collect few required known facts  about supersolvable groups and basic representation theory for the reader's convenience.

Recall that a representation $V$ of $G$ is said to be Schur irreducible if $\mathrm{End}_G(V)$ is one dimensional. It is clear that if $V$ is irreducible then $V$ is Schur irreducible. The converse clearly holds for
unitary representations. However, this does not hold for non-unitary representations (see~\cite{Beloshapka-Gorchinskiy}). Also recall that for a subgroup $H$ of a group $G$, the radical of $H$ in $G$, denoted by $\sqrt[G]{H}$, is the set $\{ g \in G \mid g^i \in H\,\,\mathrm{for} \,\, \mathrm{some}\,\, i \in \mathbb Z \}$ (see Definition~\ref{def:radical} ). In case $G$ is finitely generated nilpotent group then $\sqrt[G]{H}$ for any subgroup $H$ of $G$ is a group. However, this need not be true for general $G$. In Section~\ref{sec: kutzko}, we prove the following important sufficient condition for $\ind_H^G(\chi)$ to be Schur irreducible for a subgroup $H$ of a supersolvable group $G$.

%As pointed out by Beloshapka-Gorchinskiy~\cite{Beloshapka-Gorchinskiy}, it is not true that an arbitrary representation $V$ of a discrete group is irreducible if and only if $\mathrm{End}_G(V)$ is one dimensional. A representation $V$ is called {\it Schur irreducible} if $\mathrm{End}_G(V)$ is one dimensional. Notice that Schur irreducible is weaker condition than being irreducible.

\begin{thm}\label{thm: reduction-to-normalizer} Let $G$ be a discrete supersolvable group and let $F$ denote its Fitting subgroup. Let $H$ be a subgroup of $G$ and let $\chi$ be a character of $H$ such that $\chi^g \neq \chi$ on ${H^g \cap H}$ for all $g \in N_G(\sqrt[F]{H\cap F}) \setminus H$. Then $\mathrm{Ind}_H^G(\chi)$ is Schur irreducible.
\end{thm}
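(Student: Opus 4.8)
The plan is to combine a Mackey--Kutzko analysis of intertwining operators for finite induction with the nilpotency of the Fitting subgroup. Write $A = H \cap F$, $R = \sqrt[F]{A}$ and $N = N_G(R)$. Since $G$ is supersolvable it is polycyclic, so $F$ is a finitely generated nilpotent group and therefore, as recalled in the introduction, the radical $R = \sqrt[F]{A}$ is an honest subgroup of $F$. First I would record the elementary fact that $H \le N$: for $h \in H$ we have $A^h = H^h \cap F^h = H \cap F = A$ because $F \trianglelefteq G$, and conjugation by $h$ is an automorphism of $F$ carrying $A$ to $A$, hence carrying $R = \sqrt[F]{A}$ to $\sqrt[F]{A^h} = R$. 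Thus $H \le N$, and the hypothesis can be read as the Mackey disjointness condition ``$\chi^g \neq \chi$ on $H^g \cap H$ for every $g \in N \setminus H$''.

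Next, following Kutzko's treatment of Mackey's theorem for non-unitary representations, I would describe $\End_G(\mathrm{Ind}_H^G(\chi))$ through bi-$H$-equivariant functions on $G$ (the kernels of intertwining operators for finite induction), so that a double coset $HgH$ can contribute to $\End_G$ only when $\mathrm{Hom}_{H^g \cap H}(\chi^g, \chi) \neq 0$, i.e. when $\chi^g = \chi$ on $H^g \cap H$. The trivial double coset $H$ always contributes the scalars; the entire problem is therefore to show that no double coset outside $N$ contributes, and that inside $N$ the hypothesis kills every nontrivial contribution. The latter is immediate from the hypothesis once the former is established, so the heart of the argument is to confine the support of every intertwiner to $N$.

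To achieve this confinement I would exploit the normal nilpotent subgroup $F$. Restricting to $F$ and applying the Mackey decomposition for finite induction gives
\[
\mathrm{Ind}_H^G(\chi)\big|_F \;\cong\; \bigoplus_{g \in F \backslash G / H} \mathrm{Ind}_{A^g}^{F}\big(\chi^g|_{A^g}\big),
\]
each summand being a representation of the finitely generated nilpotent group $F$ induced from the character $\chi^g$ of $A^g = H^g \cap F$. A $G$-endomorphism is in particular $F$-equivariant, so it is governed by $F$-homomorphisms between these summands. The key input, which I would import from the representation theory of finitely generated nilpotent groups (the isolator being a subgroup, the finiteness of the extensions of a character to its isolator, and the attendant disjointness statement), is that $\mathrm{Ind}_{A^{g_1}}^{F}(\chi^{g_1})$ and $\mathrm{Ind}_{A^{g_2}}^{F}(\chi^{g_2})$ admit no nonzero $F$-homomorphism compatible with finite support unless $R^{g_1} = R^{g_2}$, that is, unless $g_1 g_2^{-1} \in N$. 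Combined with the bi-$H$-equivariance and $H \le N$, this forces every intertwiner of $\mathrm{Ind}_H^G(\chi)$ to be supported on $N$, which is exactly the confinement needed. Feeding this back into the previous step and invoking the hypothesis then yields $\End_G(\mathrm{Ind}_H^G(\chi)) = \mathbb{C}$.

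The main obstacle is this last-named input and its rigorous use in the non-unitary, infinite-dimensional setting. Because we work with finite (algebraic) induction rather than unitary induction, there is no orthogonality to separate the summands, and the naive Mackey intertwining count can over-report contributions; establishing genuine $F$-disjointness of the pieces with distinct isolators---and, more delicately, showing that a would-be $G$-intertwiner supported partly outside $N$ cannot preserve finite support---is precisely the point where Kutzko's method and the nilpotent isolator theory must be deployed with care. I expect the verification that the surviving double cosets inside $N$ contribute nothing beyond the scalars to be routine once the hypothesis is in hand, and the polycyclic--nilpotent bookkeeping (that $R$ is a subgroup and $H \le N$) to be straightforward; the disjointness-and-support analysis over $F$ is where the real work lies.
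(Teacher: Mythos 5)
Your framing is right --- Kutzko's description of $\End_G(\ind_H^G(\chi))$ reduces everything to showing that a double coset $HgH$ can support a nonzero intertwiner only when $g \in N_G(\sqrt[F]{H\cap F})$, and your preliminary observation that $H \le N_G(\sqrt[F]{H\cap F})$ is correct. But the confinement step, which you yourself flag as ``where the real work lies,'' is exactly the content of the theorem, and you do not prove it: you defer it to an unstated disjointness principle for representations of the finitely generated nilpotent group $F$ induced from characters with distinct isolators. That principle is not among the facts you can simply import (Lemma~\ref{lem: baumslag} gives properties of radicals, not disjointness of induced modules in the non-unitary, finite-induction setting), so the proposal has a hole precisely at its load-bearing point. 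Moreover, even granting such a disjointness statement, the route through $\ind_H^G(\chi)|_F \cong \oplus_{g\in F\backslash G/H}\ind_{(H\cap F)^g}^F(\chi^g)$ only distinguishes $(F,H)$-double cosets, so at best you would conclude $g \in F\cdot N_G(\sqrt[F]{H\cap F})\cdot H$ rather than $g \in N_G(\sqrt[F]{H\cap F})$; these differ in general because $F$ need not normalize $\sqrt[F]{H\cap F}$ (indeed $N_F(\sqrt[F]{H\cap F}) = \sqrt[F]{N_F(H\cap F)}$ by Lemma~\ref{lem: baumslag}, and this can be a proper subgroup of $F$). Closing that gap would require tracking how a kernel supported on a single $(H,H)$-double coset sits inside the coarser $F$-decomposition, which you have not done.

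The paper's own argument avoids all of this with a much more elementary use of the finite-support condition built into finite induction. If $s$ is a Kutzko kernel with $s(g)\neq 0$, then $s(gh^i) = s(g)\chi(h)^i \neq 0$ for each fixed $h\in H$ and every $i$, while the corresponding function $\psi f^w$ must be supported on finitely many right cosets of $H$; hence the cosets $Hgh^i$ collide, giving for each $h \in H$ some $\kappa \neq 0$ with $gh^{\kappa}g^{-1}\in H$. Applying this to $h = u^t$ where $u \in \sqrt[F]{H\cap F}$ and $u^t \in H\cap F$, and using that $H/(H\cap F)$ is finite and $F$ is normal, one finds a power of $gug^{-1}$ lying in $H\cap F$, i.e.\ $gug^{-1}\in\sqrt[F]{H\cap F}$, which is exactly the desired confinement. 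If you want to salvage your plan, this coset-collision argument is the missing idea; no disjointness theory over $F$ is required.
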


In Section~\ref{sec: existence}, we use the above result for the case when $(\pi, V)$ is an irreducible representation of supersolvable $G$ having finite weight with respect to $(H, \chi)$. We show that there exists a subgroup $H'$ and its character $\chi'$ such that the induced representation $\ind_{H'}^G(\chi')$ is Schur irreducible and $V_{H'}(\chi')$ is non-trivial. In the proof we see that finite weight condition is used crucially. The other fact that turns out to be very important is the existence of a normal subgroup $N$ of an infinite supersolvable groups $G$ such that $G/N$ is an infinite dihedral group (see Lemma~\ref{lem:index dihedral}). We remark that in case all infinite subgroups of $G$ contain normal subgroups such that quotient is infinite cyclic then proof turns out to be easier and this is the case of finitely generated nilpotent groups.

Section~\ref{sec: irreducible} is devoted to proving that the obtained induced representation $\ind_{H'}^{G}(\chi')$ is irreducible because $G$ is a supersolvable group. Recall that Schur irreducibility need not
imply irreducibility and so Theorem~\ref{thm: reduction-to-normalizer} does not immediately imply irreducibility of $\ind_{H'}^G(\chi').$ However, we show that this is indeed the case by establishing a general result
which may be of independent interest. Before we state this result we define {\it proximate normal} subgroups.
%To obtain this, we firstly prove a general result as described below.

A subgroup $H$ of $G$ is called {\it proximate normal} if for all $g \in G$, the groups $H \cap H^g$ are finite index normal subgroups of $H$ and the quotient $H/(H \cap H^g)$ is abelian. This is a generalization of the notion of a normal subgroup.

\begin{thm}
\label{thm: general-irreducibility-result}
	Let $H$ be a proximate normal subgroup of a discrete group $G$ and let $\rho$ be an irreducible representation of  $H$ such that $\ind_H^G(\rho)$ is Schur irreducible. Then $\ind_H^G(\rho)$ is an irreducible $G$-representation.
\end{thm}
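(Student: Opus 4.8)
The plan is to analyze $V := \ind_H^G(\rho)$ through its restriction to $H$ and to play the two a priori different notions of irreducibility against each other. I would realize $V$ as the finitely supported functions $f\colon G\to V_\rho$ satisfying $f(hg)=\rho(h)f(g)$, on which $G$ acts by right translation, and let $V_0\cong\rho$ denote the $H$-submodule of functions supported on the single coset $H$, with its $H$-equivariant projection $p_0\colon V\to V_0$. A first elementary observation, which I would record as a lemma, is that for every nonzero $G$-subrepresentation $W$ one has $p_0(W)=V_0$: indeed $p_0(W)$ is an $H$-submodule of the irreducible $V_0$, and if it were $0$ then translating by $G$ would force every $f\in W$ to vanish on every coset, i.e.\ $W=0$.

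The main step, and the main obstacle, is to show that $\mathrm{Res}_H V$ is a \emph{semisimple} $H$-module; this is exactly where proximate normality enters. Mackey's restriction formula for the finite induction gives
\[
\mathrm{Res}_H V \;\cong\; \bigoplus_{g\in H\backslash G/H} \ind_{K_g}^{H}\big(\rho^g|_{K_g}\big),\qquad K_g:=H\cap H^g,
\]
the $g=e$ summand being $V_0\cong\rho$. By hypothesis each $K_g$ is a finite-index normal subgroup of $H$, and (conjugating the proximate-normal relation for $H$ by $g$) also of $H^g$, with abelian quotient. Since $\rho^g$ is an irreducible $H^g$-module and $K_g$ has finite index in $H^g$, Clifford theory shows $\rho^g|_{K_g}$ is semisimple; inducing a semisimple module from a finite-index subgroup then yields a semisimple module, because over a field in which $[H:K_g]$ is invertible a module is semisimple iff its restriction to $K_g$ is (relative projectivity / separability of $\mathbb{C}[H]$ over $\mathbb{C}[K_g]$, combined with Mackey's formula for $\mathrm{Res}_{K_g}\ind_{K_g}^{H}$). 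Hence each summand, and therefore $\mathrm{Res}_H V$, is semisimple. The delicate point to get right is the case of infinite-dimensional $\rho$, where one must invoke the separability statement rather than any averaging of inner products, and use Schur's lemma in the form $\End_H(\rho)=\mathbb{C}$.

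With semisimplicity in hand the conclusion is quick, using Schur irreducibility ($\End_G(V)=\mathbb{C}$) together with Frobenius reciprocity $\Hom_G(V,M)\cong\Hom_H(\rho,\mathrm{Res}_H M)$. Let $W$ be a nonzero $G$-subrepresentation. On one hand $p_0\colon W\to V_0\cong\rho$ is a surjection of $H$-modules and $\mathrm{Res}_H W$ is semisimple (a submodule of a semisimple module), so $\rho$ splits off as a submodule of $\mathrm{Res}_H W$; thus $\Hom_H(\rho,\mathrm{Res}_H W)\neq0$, whence $\Hom_G(V,W)\neq0$. On the other hand, if $W\neq V$, take any nonzero $\phi\in\Hom_G(V,W)$ and view it in $\End_G(V)$; by Schur irreducibility $\phi$ is a nonzero scalar, hence surjective, forcing $W=V$, a contradiction. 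Therefore no proper nonzero $W$ exists and $\ind_H^G(\rho)$ is irreducible. I expect essentially all of the difficulty to reside in the semisimplicity step; once $\mathrm{Res}_H V$ is known to be semisimple, the interplay of $p_0(W)=V_0$ with $\End_G(V)=\mathbb{C}$ closes the argument formally.
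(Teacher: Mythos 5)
Your argument is correct, and it takes a genuinely different route from the paper's. The paper proves irreducibility by showing that every nonzero vector $v$ generates $\ind_H^G(\rho)$: it expands $v$ according to the Mackey decomposition of $\mathrm{Res}_H\ind_H^G(\rho)$ and runs an induction on the number of terms, using the annihilator ideals $I_0$ and $J$ (of $w_{11}$ and of $\tilde{\rho}(g_2)w_{21}$) inside $\mathbb C[H\cap H^{g_2}]$. Schur irreducibility enters there, to force $I_0\neq J$ (equality would produce a nonzero element of $\mathrm{Hom}_{H\cap H^{g_2}}(\rho,\rho^{g_2})$ and hence a non-scalar $G$-endomorphism), and an element $x\in(I_0\setminus J)\cup(J\setminus I_0)$ is then used to shorten $v$. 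You instead isolate the structural fact that $\mathrm{Res}_H\ind_H^G(\rho)$ is semisimple --- Clifford's theorem for the finite-index normal subgroup $H\cap H^g$ of $H^g$ (take a maximal $\mathbb C[H\cap H^g]$-submodule and intersect its finitely many conjugates) gives semisimplicity of $\rho^g|_{H\cap H^g}$, and the averaging idempotent over $H/(H\cap H^g)$ transfers semisimplicity back up through the induction --- and then close with the adjunction $\Hom_G(\ind_H^G\rho,W)\cong\Hom_H(\rho,W|_H)$ and a single application of $\End_G(\ind_H^G\rho)=\mathbb C$. Two remarks. First, the reciprocity you invoke is the left-adjoint form, valid for compact induction with no finite-index hypothesis on $H\leq G$ via $\ind_H^G\rho\cong\mathbb C[G]\otimes_{\mathbb C[H]}\rho$; this is not the finite-index reciprocity (\ref{frobenius-reciprocity}) displayed in the paper, but it is standard, so there is no gap. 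Second, like the paper's proof, yours uses only that $H\cap H^g$ is normal of finite index in $H$ (hence in $H^g$), not the abelian-quotient clause of proximate normality. What your route buys is a reusable lemma (semisimplicity of the Mackey restriction) and a very transparent use of Schur irreducibility at exactly one point; what the paper's route buys is a self-contained elementary induction that never needs the ``semisimple iff semisimple on a finite-index subgroup'' machinery.
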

The above result for the case when $H$ is normal was obtained in \cite{Beloshapka-Gorchinskiy}. For the case of supersolvable groups, we define a notion of subproximate normal subgroup (see Definition~\ref{defn:subproximate-normal}) and show that every subgroup of a supersolvable group is subproximate. We use this with the above general result to prove the following for supersolvable groups.
\begin{thm}
\label{thm: supersolvable-irreducible}
Let $G$ be a supersolvable group. Let $H$ be a subgroup of $G$ and let $\chi$ be a character of $H$ such that $\ind_H^G(\chi)$ is Schur irreducible. Then the induced representation $\ind_H^G(\chi)$ is an irreducible $G$-representation.
\end{thm}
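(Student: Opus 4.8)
The plan is to reduce Theorem~\ref{thm: supersolvable-irreducible} to the general criterion of Theorem~\ref{thm: general-irreducibility-result} by inducing in stages along a chain witnessing that $H$ is subproximate normal. Since every subgroup of a supersolvable group is subproximate normal (Definition~\ref{defn:subproximate-normal}), I would first fix a finite chain
\[
H = H_0 \subseteq H_1 \subseteq \cdots \subseteq H_k = G
\]
in which each $H_i$ is proximate normal in $H_{i+1}$. Setting $\rho_i = \ind_H^{H_i}(\chi)$, transitivity of induction gives $\rho_0 = \chi$, $\rho_k = \ind_H^G(\chi)$, and $\rho_{i+1} = \ind_{H_i}^{H_{i+1}}(\rho_i)$. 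The goal then becomes to show, by induction on $i$, that every $\rho_i$ is irreducible.

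The base case is immediate since $\rho_0 = \chi$ is one dimensional. For the inductive step I would apply Theorem~\ref{thm: general-irreducibility-result} with $G$ replaced by $H_{i+1}$, the subgroup $H$ by $H_i$, and $\rho$ by $\rho_i$. Two of the three hypotheses are available at once: $H_i$ is proximate normal in $H_{i+1}$ by the choice of chain, and $\rho_i$ is irreducible by the inductive hypothesis. What remains is to verify that $\rho_{i+1} = \ind_{H_i}^{H_{i+1}}(\rho_i)$ is Schur irreducible.

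The crucial intermediate result, which I expect to be the main obstacle, is a descent statement: if $\ind_H^G(\chi)$ is Schur irreducible, then so is each intermediate induction $\ind_{H_i}^{H_{i+1}}(\rho_i)$. To prove it, write $\sigma = \ind_{H_i}^{H_{i+1}}(\rho_i)$, a representation of $H_{i+1}$, and observe via transitivity that $\ind_{H_{i+1}}^G(\sigma) = \ind_H^G(\chi)$, whose endomorphism algebra is one dimensional by hypothesis. Since the finite induction functor $\ind_{H_{i+1}}^G$ (Definition~\ref{def:finite-induction}) is faithful on morphisms --- a nonzero $H_{i+1}$-endomorphism $\phi$ of $\sigma$ induces the endomorphism $f \mapsto \phi \circ f$ of $\ind_{H_{i+1}}^G(\sigma)$, which is nonzero on the function supported on $H_{i+1}$ with value $\sigma(\cdot)v$ for a vector $v$ with $\phi(v)\neq 0$ --- it yields an injection $\End_{H_{i+1}}(\sigma) \hookrightarrow \End_G(\ind_{H_{i+1}}^G \sigma)$. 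Hence $\dim \End_{H_{i+1}}(\sigma) \le 1$, and since $\sigma$ carries the identity endomorphism this dimension equals $1$, i.e.\ $\sigma$ is Schur irreducible.

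With this descent lemma in hand, Theorem~\ref{thm: general-irreducibility-result} applies at each stage, giving $\rho_{i+1}$ irreducible; running the induction up to $i = k$ yields that $\rho_k = \ind_H^G(\chi)$ is an irreducible $G$-representation. The only place where the structure of $G$ enters is through the existence of the subproximate normal chain, which is guaranteed by supersolvability; the remainder of the argument is formal, so the heart of the matter is the descent of Schur irreducibility through stages of induction and the faithfulness of finite induction used to establish it.
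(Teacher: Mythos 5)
Your proof is correct and follows essentially the same route as the paper: both arguments run an induction along a subproximate normal series supplied by Lemma~\ref{proximate-series} and apply the proximate-normal criterion (Theorem~\ref{thm: general-irreducibility-result}, i.e.\ Theorem~\ref{thm:proximate-schur}) at each stage via transitivity of induction. The one substantive difference is in your favour: the paper's proof simply asserts that $\ind_H^{H_1}(\chi)$ is irreducible by Theorem~\ref{thm:proximate-schur} without verifying that this \emph{intermediate} induced representation is Schur irreducible, whereas you isolate and prove exactly this descent statement via the injection $\End_{H_{i+1}}(\sigma) \hookrightarrow \End_G(\ind_{H_{i+1}}^{G}(\sigma))$ coming from faithfulness of finite induction on endomorphisms. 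Your bottom-up formulation also makes explicit that the inductive statement must concern arbitrary irreducible representations $\rho_i$ of the intermediate subgroups rather than characters, which is why the appeal is correctly made to Theorem~\ref{thm: general-irreducibility-result} rather than to the theorem being proved.
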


In section~\ref{sec: main-theorem}, we establish the main result of the article.
%prove existence of a non-trivial $G$-linear map intertwining $\pi$ and $\ind_{H'}^G(\chi')$ for $H'$ and $\chi'$ obtained above. These lead us to prove the following main result of this article.

\begin{thm}\label{main-theorem}
Let $G$ be a discrete supersolvable group. An irreducible representation of $G$ is monomial if and only if it has
finite weight.
\end{thm}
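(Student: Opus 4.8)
The plan is to prove the two implications separately, treating ``monomial $\Rightarrow$ finite weight'' as the elementary direction and ``finite weight $\Rightarrow$ monomial'' as the substantial one, where the machinery of the preceding sections is assembled.

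For the forward direction, suppose $\pi \cong \ind_H^G(\chi)$ for a character $\chi$ of a subgroup $H$. I would invoke Frobenius reciprocity for finite induction in its left-adjoint form: since $\ind_H^G(\chi) = \bC[G]\otimes_{\bC[H]}\bC_\chi$, the tensor--hom adjunction gives a natural isomorphism $\End_G(\pi) = \Hom_G(\ind_H^G\chi, \pi) \cong \Hom_H(\chi, \pi|_H)$, valid for arbitrary (possibly infinite-dimensional) $\pi$. The right-hand space is canonically identified with the weight space $V_H(\chi)$. As $\pi$ is irreducible it is Schur irreducible, so $\End_G(\pi)$ is one-dimensional; hence $V_H(\chi)$ is one-dimensional, in particular nonzero and finite-dimensional, and $\pi$ has finite weight with respect to $(H,\chi)$.

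For the converse, let $(\pi, V)$ be irreducible with finite weight with respect to some $(H,\chi)$. If $V$ is finite-dimensional the conclusion is immediate from Proposition~\ref{prop: finite-dimensional}, so the content lies in the infinite-dimensional case. Here I would proceed in three steps. First, apply the existence result of Section~\ref{sec: existence}: exploiting the finite-weight hypothesis together with the infinite-dihedral quotient produced by Lemma~\ref{lem:index dihedral}, one obtains a subgroup $H'$ and a character $\chi'$ of $H'$ such that $\chi'$ satisfies the non-conjugacy condition of Theorem~\ref{thm: reduction-to-normalizer} --- whence $\ind_{H'}^G(\chi')$ is Schur irreducible --- and such that the weight space $V_{H'}(\chi')\subseteq V$ is nonzero. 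Second, since $G$ is supersolvable, Theorem~\ref{thm: supersolvable-irreducible} upgrades Schur irreducibility to genuine irreducibility, so $\ind_{H'}^G(\chi')$ is an irreducible $G$-representation.

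Third, I would produce the isomorphism $\pi \cong \ind_{H'}^G(\chi')$. Because $V_{H'}(\chi') \cong \Hom_{H'}(\chi', \pi|_{H'})$ is nonzero, Frobenius reciprocity again yields $\Hom_G(\ind_{H'}^G\chi', \pi) \cong \Hom_{H'}(\chi', \pi|_{H'}) \neq 0$, so there is a nonzero $G$-homomorphism $\phi\colon \ind_{H'}^G(\chi') \to \pi$. Both source and target are irreducible, so $\ker\phi = 0$ and $\mathrm{im}\,\phi = V$; hence $\phi$ is an isomorphism and $\pi$ is monomial. The main obstacle in this program is concentrated in the first step --- the construction of $(H',\chi')$ carried out in Section~\ref{sec: existence}, where the finite-weight condition and the dihedral-quotient structure must be combined to force the Schur-irreducibility hypothesis of Theorem~\ref{thm: reduction-to-normalizer}. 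Once that construction is in hand, the present theorem follows purely formally from Theorem~\ref{thm: supersolvable-irreducible}, Frobenius reciprocity, and Schur's lemma.
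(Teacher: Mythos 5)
Your proposal is correct and follows essentially the same route as the paper: the substantial direction is assembled exactly as in the paper from the construction of $(H',\chi')$ in Section~\ref{sec: existence}, the upgrade from Schur irreducibility to irreducibility via Theorem~\ref{thm: supersolvable-irreducible}, and a nonzero intertwiner between two irreducibles (Proposition~\ref{prop: reduction}, i.e.\ Frobenius reciprocity). The only cosmetic difference is in the easy direction, where you deduce $\dim V_H(\chi)=1$ formally from the tensor--hom adjunction together with Schur's lemma, whereas the paper reaches the same conclusion by an explicit computation with coset functions and the Kutzko decomposition of $\mathrm{End}_G(\ind_H^G(\chi))$.
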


This is achieved by establishing a non-trivial $G-$linear map between $\pi$ (given finite weight representation) and $\ind_{H'}^{G}(\chi')$ which is irreducible by Theorem~\ref{thm: supersolvable-irreducible}. In section~\ref{sec: infinite-dihedral}, we discuss the example of infinite dihedral group $(D_\infty)$ (see Definition~\ref{eg: infinite-dihedral}) in detail and prove the following.
\begin{thm}
	The infinite dihedral group is monomial.
\end{thm}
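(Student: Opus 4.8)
The plan is to reduce the statement to a finiteness claim and then invoke Proposition~\ref{prop: finite-dimensional}, which already shows that every finite-dimensional representation of a discrete supersolvable group is monomial. Note first that $D_\infty$ is supersolvable: with presentation $D_\infty=\langle r,s \mid s^2=1,\ srs^{-1}=r^{-1}\rangle$, the invariant series $1 \triangleleft \langle r\rangle \triangleleft D_\infty$ has cyclic quotients $\mathbb Z$ and $\mathbb Z/2$. Hence everything reduces to proving that \emph{every} irreducible complex representation of $D_\infty$ is finite-dimensional.

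So let $(\pi,V)$ be irreducible, viewed as a simple $\mathbb C[D_\infty]$-module, and set $A=\langle r\rangle\cong\mathbb Z$, so that $\mathbb C[A]\cong R:=\mathbb C[t,t^{-1}]$ is a principal ideal domain and $\mathbb C[D_\infty]=\mathbb C[A]\oplus\mathbb C[A]s$ is free of rank $2$ over $\mathbb C[A]$. The first key step is that, $V$ being simple and hence cyclic over $\mathbb C[D_\infty]$, the restriction $V|_A$ is generated by two elements over $R$; since $R$ is Noetherian, $V|_A$ is a finitely generated $R$-module, and the structure theorem for finitely generated modules over a PID applies.

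The second key step is a Clifford-type dichotomy forced by simplicity. The $R$-torsion submodule $V_{\mathrm{tors}}$ is $D_\infty$-stable, because $\pi(s)$ acts $R$-semilinearly via the automorphism $t\mapsto t^{-1}$ and therefore carries $R$-torsion to $R$-torsion; thus $V_{\mathrm{tors}}$ is a $\mathbb C[D_\infty]$-submodule and, by irreducibility, equals $0$ or $V$. If $V_{\mathrm{tors}}=V$, then $V|_A$ is finitely generated and torsion over $R$, hence finite-dimensional over $\mathbb C$, and we are done. The remaining case $V_{\mathrm{tors}}=0$, i.e.\ $V|_A\cong R^n$ free, is where the main obstacle lies, since a priori such a $V$ could be an infinite-dimensional simple module, which would break the theorem.

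I would dispose of the free case by producing a proper nonzero $D_\infty$-submodule, contradicting simplicity. Fix a maximal ideal $\mathfrak m=(t-\lambda)$ with $\lambda\in\mathbb C^\times$ and put $\mathfrak m'=(t-\lambda^{-1})$; as $\pi(s)$ intertwines multiplication by $t$ and by $t^{-1}$, it sends $\mathfrak m V$ to $\mathfrak m' V$. When $\lambda=\lambda^{-1}$ the submodule $\mathfrak m V$ is already $D_\infty$-stable, nonzero, and proper (its quotient is $(R/\mathfrak m)^n\cong\mathbb C^n\neq 0$); when $\lambda\neq\lambda^{-1}$ the intersection $\mathfrak m V\cap\mathfrak m' V=(t-\lambda)(t-\lambda^{-1})V$ is $A$-stable, $s$-stable, nonzero, and of finite codimension $2n$, hence again proper. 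Either way simplicity fails, so the free case cannot occur, forcing $V_{\mathrm{tors}}=V$. Thus every irreducible representation of $D_\infty$ is finite-dimensional and, by Proposition~\ref{prop: finite-dimensional}, monomial, so $D_\infty$ is a monomial group. As a by-product one reads off the full list of irreducibles: four one-dimensional characters $r,s\mapsto\pm1$ and the two-dimensional modules $\ind_A^{D_\infty}(\chi_\lambda)$ with $\chi_\lambda(r)=\lambda$ and $\lambda\neq\pm1$; these are manifestly monomial, and one may alternatively certify them through the finite-weight criterion of Theorem~\ref{main-theorem} using the $\lambda$-eigenspace of $\pi(r)$.
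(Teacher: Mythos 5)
Your proposal is correct, and it reaches the goal by a genuinely different route from the paper, even though both arguments share the same skeleton: reduce to showing every irreducible representation of $D_\infty$ is finite dimensional and then invoke Proposition~\ref{prop: finite-dimensional}. The paper proves finite dimensionality by direct linear algebra: it fixes $W=\operatorname{span}\{g^iv\}$, splits into the three cases $sW\cap W=0$, $0\subsetneq sW\cap W\subsetneq W$, $sW=W$, and in each case exhibits an explicit proper invariant subspace (e.g.\ the hyperplane $W_0=\{\sum a_ig^iv \mid \sum a_i=0\}$, or the span of the $g^iw$ for $w\in sW\cap W$). You instead work module-theoretically over $R=\mathbb C[t,t^{-1}]$: cyclicity of a simple $\mathbb C[D_\infty]$-module plus $\mathbb C[D_\infty]=R\oplus Rs$ gives finite generation of $V|_A$ over the PID $R$; the $s$-semilinearity with respect to $t\mapsto t^{-1}$ makes the torsion submodule $D_\infty$-stable, so simplicity forces $V$ to be either torsion (hence finite dimensional) or free; and the free case dies because $\mathfrak m V$ (for $\lambda=\pm1$) or $\mathfrak m V\cap\mathfrak m'V=(t-\lambda)(t-\lambda^{-1})V$ (for $\lambda\neq\lambda^{-1}$) is a nonzero proper invariant submodule of finite codimension. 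All the individual steps check out. What your approach buys is robustness and extra information: it sidesteps the implicit linear-independence assumptions in the paper's case analysis (the paper asserts without argument that $\{g^iv\}$ is a basis of $W$ in Case~1), and it delivers the complete list of irreducibles --- four characters and the two-dimensional $\operatorname{Ind}_A^{D_\infty}(\chi_\lambda)$, $\lambda\neq\pm1$ --- as a byproduct. What the paper's approach buys is that it is entirely elementary, using no structure theory of modules over a PID.
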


\section{Preliminaries}
\label{sec: preliminaries}
In this section we fix notation and recollect few facts that we use later in the results.

\subsection{Results from Group Theory:}
We will use the following result very frequently.
 \begin{lemma}(\cite[Section~2]{Hall1950})
 	\label{finite-finite.index}
 	For a finitely generated group $G$, there exists only finitely many subgroups of a given index.
 	
 \end{lemma}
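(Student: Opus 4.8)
The plan is to set up an injection from the set of index-$n$ subgroups of $G$ into the set $\Hom(G, S_n)$ of group homomorphisms from $G$ to the symmetric group on $n$ letters, and then to observe that the latter set is finite precisely because $G$ is finitely generated. So fix the index $n$ and a finite generating set $g_1, \dots, g_k$ of $G$.

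First I would associate a homomorphism to each subgroup. Given a subgroup $H \le G$ with $[G:H] = n$, let $G$ act on the set of left cosets $G/H$ by left translation. Choosing a bijection $\beta_H \colon G/H \to \{1, \dots, n\}$ that sends the coset $H$ to the point $1$, conjugation of the translation action by $\beta_H$ produces a homomorphism $\phi_H \colon G \to S_n$. The stabilizer of the coset $H$ under left translation is exactly $\{g \in G : gH = H\} = H$, so under $\phi_H$ the point $1$ has stabilizer $H$; that is,
\[
H = \{ g \in G : \phi_H(g) \cdot 1 = 1 \}.
\]

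This recovery formula is the crux of the argument: it shows that $H$ can be reconstructed from $\phi_H$ alone, so the assignment $H \mapsto \phi_H$ is injective. The only freedom in the construction is the labeling $\beta_H$ of the remaining $n-1$ cosets; different choices yield distinct but conjugate homomorphisms, and fixing one labeling per subgroup produces a genuinely well-defined map, which is all that the counting requires. Consequently the number of index-$n$ subgroups is at most $|\Hom(G, S_n)|$.

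Finally, since $G = \langle g_1, \dots, g_k \rangle$, any homomorphism $\phi \colon G \to S_n$ is completely determined by the tuple $(\phi(g_1), \dots, \phi(g_k)) \in S_n^{\,k}$, whence $|\Hom(G, S_n)| \le (n!)^k < \infty$. Combining this with the previous step bounds the number of subgroups of index $n$ by $(n!)^k$, and finiteness follows. The whole argument is elementary; the single point that warrants care is the realization of $H$ as a point-stabilizer of $\phi_H$, since it is this identity that forces injectivity of $H \mapsto \phi_H$ in spite of the labeling ambiguity.
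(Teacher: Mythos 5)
Your proof is correct, and it is exactly the classical argument of M.~Hall that the paper cites for this lemma (the paper itself gives no proof, only the reference): a subgroup $H$ of index $n$ yields a homomorphism $G \to S_n$ via the left-coset action, $H$ is recovered as the stabilizer of the point corresponding to the trivial coset, and $\lvert \mathrm{Hom}(G,S_n)\rvert \le (n!)^k$ since a homomorphism is determined by its values on the $k$ generators. Your handling of the labeling ambiguity via the point-stabilizer recovery formula is precisely the right way to secure injectivity, so there is nothing to add.
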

% \begin{proof}
% 	Consider the set $\mathcal{F}$ of all subgroups of index $n$. Then for any $H \in \mathcal{F}$, the group $G$ acts on right cosets of $H$ in $G$ and kernel of this action is precisely $H$. Thus each element of $\mathcal{F}$ determines a distinct group homomorphism from $G$ to the symmetric group $S_n$ of $n$ elements. Now $G$ is finitely generated so we have only finitely many  homomorphisms from $G$ to $S_n$ and therefore $\mathcal{F}$ is finite.
% \end{proof}

Recall a group $G$ is called {\it supersolvable}, if there exist $k \in \mathbb N$ and normal subgroups $N_i$ of $G$ for $1 \leq i \leq k$ such that,
\begin{equation}
\label{eq:invarint-series}
1=N_{k+1} \subseteq N_{k} \subseteq N_{k-1} \cdots \subseteq N_0 = G,
\end{equation}
and for all $0 \leq i \leq k$ groups $N_i/N_{i+1}$ are cyclic.

The following examples of supersolvable groups may be useful for the reader to keep in mind while working out the details.
\begin{example}(Infinite dihedral Group($D_\infty$))
	\label{eg: infinite-dihedral}
	This is a generalization of finite dihedral groups. As a set $D_\infty$ consists of tuples $(s, z)$, where $s = \pm 1$ and $z \in \mathbb Z$ with multiplication as given below,
	\[
	(s, z)(s', z') = (ss', s'z + z').
	\]
It is easy to see that $D_\infty$ is a supersolvable group that is not nilpotent.
\end{example}
\begin{example}(Generalized discrete Heisenberg group) We shall use $GH_3(\mathbb Z)$ to denote the following set.
\[
GH_3(\mathbb Z) = \{ \left[ \begin{matrix} a & x & z \\ 0 & b & y \\ 0 & 0 & c \end{matrix} \right] \mid a,b,c \in \{ \pm1 \} \,\, \mathrm{and}\,\, x,y,z \in \mathbb Z   \}
\]
Then $GH_3(\mathbb Z)$ is a group under matrix multiplication and we call this generalized discrete Heisenberg group. This is also an example of a supersolvable group that is not nilpotent.
\end{example}

Note that a supersolvable group is finitely generated by definition. As subgroups of supersolvable group are supersolvable, it follows that every subgroup of a supersolvable group is finitely generated. As mentioned earlier, and is not very difficult to show, that a nilpotent group is supersolvable if and only if it is finitely generated. The following result says the relation between supersolvable and nilpotent is deeper.

\begin{definition}(Fitting subgroup) The Fitting subgroup of $G$ is the subgroup of $G$ generated by all normal nilpotent subgroups of $G$.
\end{definition}
\begin{lemma}(\cite[Theorem~1.10]{Pinnock98})
\label{fitting-subgroup}
Let $G$ be a supersolvable group and $F$ be its Fitting subgroup. Then the following are true.
\begin{enumerate}
\item The group $F$ is a nilpotent normal subgroup of $G$.
\item The group $G/F$ is a finite abelian subgroup.
\end{enumerate}
\end{lemma}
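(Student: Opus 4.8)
The plan is to establish the two assertions separately: first that $F$ is a normal nilpotent subgroup, and then to analyse $G/F$ through the conjugation action of $G$ on the cyclic factors of the invariant series~\eqref{eq:invarint-series}.

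For part (1), normality is immediate: $F$ is generated by the family of all normal nilpotent subgroups of $G$, and this family is preserved by every automorphism of $G$, so $F$ is characteristic and in particular normal. For nilpotency I would use that, as noted above, every subgroup of a supersolvable group is finitely generated; hence $F$ is finitely generated. Each of finitely many generators lies in some normal nilpotent subgroup, so in fact $F$ coincides with a product $N^{(1)} \cdots N^{(r)}$ of finitely many normal nilpotent subgroups. Fitting's theorem — that the product of two normal nilpotent subgroups is nilpotent, with class bounded by the sum of the two classes — then yields by induction on $r$ that $F = N^{(1)} \cdots N^{(r)}$ is nilpotent.

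For part (2), I would exploit the series~\eqref{eq:invarint-series}. Since $N_i$ and $N_{i+1}$ are both normal in $G$, conjugation induces a homomorphism $\phi_i \colon G \to \mathrm{Aut}(N_i/N_{i+1})$ for each $0 \le i \le k$. As $N_i/N_{i+1}$ is cyclic, its automorphism group is finite abelian, since $\mathrm{Aut}(\mathbb Z) \cong \mathbb Z/2$ and $\mathrm{Aut}(\mathbb Z/m) \cong (\mathbb Z/m)^{\times}$. Setting $K = \bigcap_{i=0}^{k} \ker \phi_i$, the combined map $G \to \prod_i \mathrm{Aut}(N_i/N_{i+1})$ realises $G/K$ as a subgroup of a finite product of finite abelian groups, so $G/K$ is finite and abelian. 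It then suffices to show $K \subseteq F$.

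The group $K$ is normal in $G$, being an intersection of kernels of homomorphisms out of $G$, so it remains to verify that $K$ is nilpotent. By construction $K$ acts trivially on every factor, i.e. $[N_i, K] \subseteq N_{i+1}$ for all $i$, so $K$ stabilizes the series. Putting $K_i = K \cap N_i$ gives $[K_i, K] \subseteq [N_i, K] \cap K \subseteq N_{i+1} \cap K = K_{i+1}$, so the descending chain $K = K_0 \supseteq K_1 \supseteq \cdots \supseteq K_{k+1} = 1$ is a central series and $K$ is nilpotent. Being a normal nilpotent subgroup, $K \subseteq F$, whence $G/F$ is a quotient of the finite abelian group $G/K$ and is therefore finite and abelian. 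The main obstacle is the nilpotency bookkeeping — confirming that Fitting's theorem propagates across a finite product in part (1) and that the filtration $K \cap N_i$ really is central in part (2); once these are in hand, the finiteness of $G/F$ falls out from the finiteness of the automorphism groups of cyclic groups.
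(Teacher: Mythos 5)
The paper does not actually prove this lemma; it is imported verbatim from Pinnock (Theorem 1.10), so there is no in-text argument to compare against. Your proof is correct and is the standard one. In part (1), the key points all hold: $F$ is characteristic, hence normal; every subgroup of a supersolvable group is finitely generated, so $F$ is a product of finitely many normal nilpotent subgroups; and Fitting's theorem propagates nilpotency across that finite product by induction. In part (2), the definition of supersolvability used in the paper already requires each $N_i$ to be normal in $G$, so the maps $\phi_i\colon G\to\mathrm{Aut}(N_i/N_{i+1})$ are defined, their targets are finite abelian ($\mathrm{Aut}(\mathbb Z)\cong\mathbb Z/2$, $\mathrm{Aut}(\mathbb Z/m)\cong(\mathbb Z/m)^{\times}$), and the stabilizer $K=\bigcap_i\ker\phi_i$ is normal and nilpotent via the central series $K_i=K\cap N_i$, since $[K_i,K]\subseteq[N_i,K]\cap K\subseteq N_{i+1}\cap K=K_{i+1}$; hence $K\subseteq F$ and $G/F$ is a quotient of the finite abelian group $G/K$. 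The only inaccuracy is cosmetic: a generator of $F$ need not lie in a \emph{single} normal nilpotent subgroup, only in a product of finitely many of them (a word in elements drawn from several such subgroups); this does not affect the conclusion that $F=N^{(1)}\cdots N^{(r)}$ for some finite $r$, which is all the induction via Fitting's theorem requires.
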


%At last we recall that Fitting subgroup of $G$ is the  In case, $G$ is supersolvable, its fitting subgroup, denoted by $F$, satisfies the following properties.
%
%\begin{enumerate}
%
%	\item[PF8:] The Fitting subgroup of supersolvable group $G$ is a nilpotent finite index normal subgroup of $G$ such that $G/F$ is abelian.
%
%\end{enumerate}

It is well known that for any given invariant series of a supersolvble group $G$ of the form (\ref{eq:invarint-series}), the number of $i$ such that $N_i/N_{i+1}$ is an infinite cyclic group is an invariant of the group $G$ (see \cite[5.4.13]{Robinson96}). This invariant is called {\it Hirsch length} of $G$ and is denoted by $\h(G)$. If $N$ is a normal subgroup of $G$ then $N$ has finite index in $G$ if and only if $\h(N) = \h(G).$
In our proofs we will use induction on $\h(G).$  An example of such a result is the following which is crucial for us.

\begin{lemma}(\cite[Lemma 3.8]{MR470211})
\label{lem:index dihedral} Let $G$ be an infinite supersolvable group. Then $G$ has a normal subgroup $N$ such that $G/N$ is either an infinite cyclic group or an infinite dihedral group.
\end{lemma}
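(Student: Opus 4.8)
The plan is to prove the final statement, namely Lemma~\ref{lem:index dihedral}: every infinite supersolvable group $G$ has a normal subgroup $N$ with $G/N$ infinite cyclic or infinite dihedral. My strategy is to induct on the Hirsch length $\h(G)$ and exploit the invariant normal series with cyclic quotients that defines supersolvability, isolating the top-most infinite cyclic quotient.

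\medskip

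First I would fix a supersolvable invariant series $1 = N_{k+1} \subseteq N_k \subseteq \cdots \subseteq N_0 = G$ with each $N_i/N_{i+1}$ cyclic. Since $G$ is infinite, at least one quotient is infinite cyclic, so $\h(G) \geq 1$. Reading from the top, let $N_j$ be the smallest index in the series with $G/N_j$ infinite (equivalently, the last place where an infinite cyclic quotient first appears going downward); by shrinking via the finite-quotient part $G/N_{j-1}$, I can arrange to study the quotient group $\bar G = G/N_{j+1}$, which has an infinite cyclic normal subgroup $\bar N_j = N_j/N_{j+1}$ with $\bar G/\bar N_j$ finite. The key case is therefore a supersolvable group $\bar G$ containing an infinite cyclic normal subgroup $C \cong \mathbb Z$ of finite index. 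The conjugation action of $\bar G$ on $C$ gives a homomorphism $\bar G \to \mathrm{Aut}(C) = \{\pm 1\}$, because $\mathrm{Aut}(\mathbb Z)$ has order two.

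\medskip

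The heart of the argument is to analyze this sign homomorphism $\varphi\colon \bar G \to \{\pm 1\}$. If $\varphi$ is trivial, then $C$ is central in $\bar G$, and since $C$ has finite index, a standard argument (using that $\bar G/C$ is finite, together with Lemma~\ref{finite-finite.index} or the structure of finite-by-cyclic groups) produces a normal subgroup with infinite cyclic quotient: concretely, one finds a finite-index characteristic infinite cyclic subgroup $C' \leq C$ that is normal in $\bar G$ with $\bar G/C'$ finite, and then maps onto $\mathbb Z$ by splitting off the torsion. If instead $\varphi$ is nontrivial, then the preimage setup forces a dihedral quotient: the group generated by $C$ and an element acting by inversion maps onto $D_\infty$, realized through the presentation in Example~\ref{eg: infinite-dihedral} via $(s,z)$ with $s = \varphi(g)$ and $z$ recording the $C$-coordinate. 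In both cases I pull the resulting normal subgroup back along $\bar G = G/N_{j+1} \twoheadleftarrow G$ to obtain the desired $N \trianglelefteq G$.

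\medskip

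The main obstacle I anticipate is the bookkeeping needed to guarantee that the infinite cyclic or inversion-acting subgroup one extracts is genuinely \emph{normal} in all of $\bar G$, not merely in a finite-index subgroup, and that the quotient is exactly $\mathbb Z$ or $D_\infty$ rather than a finite extension thereof. Handling the trivial-$\varphi$ case requires care: a finite-by-(infinite cyclic) group need not split, so I cannot simply read off a $\mathbb Z$ quotient, and I would instead locate the torsion subgroup (which is finite and characteristic, hence normal) and quotient by it, using the supersolvable structure to certify that the torsion is a genuine normal subgroup with infinite cyclic quotient. The nontrivial-$\varphi$ case is cleaner but still demands verifying that the commutator relations match the $D_\infty$ multiplication exactly, which is where the explicit cocycle $s'z + z'$ from Example~\ref{eg: infinite-dihedral} must be checked against the conjugation action on $C$.
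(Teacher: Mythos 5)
The paper does not prove this lemma at all --- it is quoted verbatim from \cite[Lemma 3.8]{MR470211} --- so there is no internal proof to compare against; I am judging your argument on its own. Your skeleton is the standard one (reduce to an infinite cyclic normal subgroup $C$ of finite index via the invariant series, then split on the sign homomorphism $\varphi\colon \bar G \to \mathrm{Aut}(C)\cong\{\pm 1\}$), and modulo an off-by-one in the indices (you want $\bar G = G/N_j$ with $N_{j-1}/N_j \cong \mathbb Z$ of finite index, not $G/N_{j+1}$) the reduction is fine. But both cases as written have real gaps.

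In the nontrivial-$\varphi$ case you exhibit a \emph{subgroup} of $\bar G$ (generated by $C$ and one inverting element) that surjects onto $D_\infty$. That is not what the lemma asks for: you need a normal subgroup $N$ of the whole group with $G/N \cong D_\infty$, i.e.\ a surjection defined on all of $\bar G$, and your recipe ``$s=\varphi(g)$, $z$ the $C$-coordinate'' does not define a homomorphism on elements outside $C\cdot\langle\text{that element}\rangle$ (nor is a semidirect splitting available a priori). The repair is to kill a normal subgroup of all of $\bar G$: take $K=\ker\varphi$, which has $C$ central of finite index; its torsion elements form a finite subgroup $T$ characteristic in $K$, hence normal in $\bar G$; then $\bar G/T$ is an extension of $\mathbb Z/2$ by $K/T\cong\mathbb Z$ with inversion action, and any element outside $K/T$ has square fixed by inversion, hence trivial, so $\bar G/T\cong D_\infty$. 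In the trivial-$\varphi$ case the same issue bites harder: you assert the torsion elements form a ``finite and characteristic, hence normal'' subgroup, but in a general supersolvable group the torsion elements are \emph{not} a subgroup --- in $D_\infty$ itself the product of two reflections is an infinite-order translation. What rescues you is precisely that $C$ is central of finite index, so Schur's theorem gives $[\bar G,\bar G]$ finite, and the preimage of the torsion of the finitely generated abelian group $\bar G/[\bar G,\bar G]$ is then a finite characteristic subgroup with infinite cyclic quotient. That appeal to Schur (or an equivalent) is the missing ingredient; ``using the supersolvable structure'' does not by itself certify it. With these two repairs the proof goes through.
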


Other properties of subgroups of supersolvable group, that we will use, are given by the following:

\begin{lemma} (see \cite[5.4.11, 5.4.15]{Robinson96})
\label{lem: infinite-cyclic}
 \begin{enumerate}
 \item A torsion subgroup of a supersolvable group is finite.
 \item An infinite supersolvable group contains an infinite cyclic normal subgroup.
\end{enumerate}
\end{lemma}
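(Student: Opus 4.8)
The plan is to treat the two statements separately, relying only on the existence of an invariant series with cyclic factors together with the facts, recalled above, that every subgroup of a supersolvable group is again supersolvable and finitely generated.

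For (1), let $T$ be a subgroup of the supersolvable group $G$ in which every element has finite order. Since $T$ is itself supersolvable it carries an invariant series $1 = M_{l+1} \subseteq M_l \subseteq \cdots \subseteq M_0 = T$ with each factor $M_i/M_{i+1}$ cyclic. Every element of such a factor is the image of an element of $T$, hence of finite order, so each $M_i/M_{i+1}$ is a periodic cyclic group and therefore finite. As the series has only finitely many terms, $|T| = \prod_{i} |M_i/M_{i+1}|$ is finite. This is the entire argument, and there is no real obstacle here.

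For (2), fix an invariant series $1 = N_{k+1} \subseteq N_k \subseteq \cdots \subseteq N_0 = G$ with cyclic factors, all $N_i$ normal in $G$, and let $j$ be the largest index for which $N_j$ is infinite; this exists because $N_0 = G$ is infinite while $N_{k+1} = 1$. Write $A = N_j$ and $B = N_{j+1}$. By maximality of $j$ the group $B$ is finite and $A$ is infinite; both are normal in $G$, and the factor $A/B$ is cyclic of infinite order, so $A/B \cong \mathbb{Z}$. If $B = 1$ then $A$ is already an infinite cyclic normal subgroup and we are done, so assume $B \neq 1$. First I would record that the set of torsion elements of $A$ is exactly $B$: any torsion element maps to a torsion element of $A/B \cong \mathbb{Z}$, hence to the identity, so lies in $B$, while $B$ is finite. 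In particular $B = \tau(A)$ is characteristic in $A$. The key step is then to produce an infinite cyclic subgroup of $A$ that is not merely normal in $A$ but \emph{characteristic} in $A$, since a characteristic subgroup of the normal subgroup $A$ is automatically normal in $G$. Choosing $t \in A$ whose image generates $A/B$, one has $A = B \rtimes \langle t \rangle$ with $\langle t \rangle \cong \mathbb{Z}$. Conjugation by $t$ is an automorphism of the finite group $B$, hence has finite order $d$, so $t^d$ centralises $B$; as $t^d$ also commutes with $t$ and $A = \langle B, t \rangle$, we get $t^d \in Z(A)$, an element of infinite order, so $Z(A)$ is infinite. Being a subgroup of the supersolvable group $A$, the centre $Z(A)$ is finitely generated abelian with torsion subgroup contained in $B$, and its free rank is at most $\h(A) = 1$; hence $Z(A) \cong \mathbb{Z} \oplus \Phi$ with $\Phi$ finite. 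Finally I would take the power subgroup $Z(A)^{|\Phi|} = \{ z^{|\Phi|} : z \in Z(A) \}$: it annihilates the torsion part and is therefore infinite cyclic, and being the image of a power map it is characteristic in $Z(A)$, hence characteristic in $A$, hence normal in $G$, which is the desired subgroup.

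The main obstacle is precisely this final normality issue. An infinite cyclic factor of the series, or the subgroup $\langle t \rangle$, is visibly present but need not be normal in $G$, and even the infinite cyclic central subgroup $\langle t^d \rangle$ is only guaranteed to be central in $A$, not preserved by all of $G$. Passing to the full centre $Z(A)$ and extracting its characteristic infinite cyclic part is exactly what upgrades ``normal in a quotient'' or ``central in $A$'' to ``normal in $G$''; verifying that $Z(A)$ has free rank exactly one, so that the power subgroup is cyclic rather than a higher-rank free abelian group, is the one place where the Hirsch-length bookkeeping is genuinely used.
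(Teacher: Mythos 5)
Your proof is correct, and it is worth noting that it cannot be compared against an internal argument: the paper quotes this lemma as a known fact, citing Robinson [5.4.11, 5.4.15], and supplies no proof of its own. Your part (1) is the standard series argument and is complete. Your part (2) is sound, and it places the supersolvability hypothesis exactly where it is needed: you take $A = N_j$ from the defining invariant series, whose terms are normal in $G$ (not merely subnormal, as they would be for a general polycyclic group), and you then upgrade ``central in $A$'' to ``normal in $G$'' through a chain of characteristic subgroups ($B = \tau(A)$ in $A$, then $Z(A)$, then the power subgroup $Z(A)^{|\Phi|}$). This care is essential rather than pedantic, since statement (2) fails for polycyclic groups in general: for instance $\mathbb{Z}^2 \rtimes \mathbb{Z}$ with the generator acting by the hyperbolic matrix $\left(\begin{smallmatrix}2&1\\1&1\end{smallmatrix}\right)$ has no infinite cyclic normal subgroup (an infinite cyclic normal subgroup inside $\mathbb{Z}^2$ would force a rational eigenvector, and one projecting nontrivially to $\mathbb{Z}$ would force a power of the matrix to be the identity), so any argument using only subnormality of the series terms would have to fail. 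One small streamlining: the Hirsch-length bookkeeping for the rank of $Z(A)$ is avoidable, since $Z(A) \cap B = \tau(Z(A)) = \Phi$ and $Z(A)/(Z(A) \cap B)$ embeds into $A/B \cong \mathbb{Z}$, which bounds the free rank by $1$ directly; with that substitution your argument is entirely elementary, using nothing beyond the invariant series, the structure theorem for finitely generated abelian groups, and finiteness of $\mathrm{Aut}(B)$.
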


\subsection{Results from Representation Theory:}
Let $G$ be a topological group. By a  representation of $G$ we mean a group
homomorphism from $G$
to the set of automorphisms of a complex vector space $V$.
We denote this by $(\pi, V)$. Whenever meaning is clear from the context, we also denote representation by $\pi$ or $V$ only. By a character of a group we always mean its one dimensional representation.

For our case we will only deal with
cases where $G$ is a discrete group. For this case, we use finite induction (also called compact induction) as defined below.

\begin{definition}(Induced representation)
\label{def:finite-induction}
	Let $H$ be a subgroup of a discrete group $G$ and $(\rho, W)$ be a
	representation of $H$. The induced representation
	$(\widetilde{\rho}, \widetilde{W})$ of $\rho$ from $H$ to $G$ has
	representation space $\widetilde{W}$ consisting of functions $f: G
	\rightarrow \End(W)$ satisfying the following:
	\begin{enumerate}
		\item $f(hg) = \rho(h^{-1}) f(g)$ for all $g \in G$, $h \in H$;
		\item Support of $f$ is contained in a set of finite number of
		right cosets of $H$ in $G$,
	\end{enumerate}
	and the homomorphism $\widetilde{\rho}: G \rightarrow
	\mathrm{Aut}(\widetilde{W})$ given by $\widetilde{\rho}(g)f(x)
	= f(xg)$ for all $x, g \in G$. We denote this induced
	representation by $\ind_H^G(\rho)$.
\end{definition}

Further recall, a representation $(\pi, V)$ of a group $G$ is said to have {\bf
        finite weight} if there is a subgroup $H$ and a character $\chi: H
    \rightarrow \mathbb C^{\times}$ such that the space $V_H(\chi)$
    defined by,
    \[
    V_H(\chi) = \{v \in V \mid \pi(h) v = \chi(h) v\,\, \mathrm{ for \,\, all}\,\,  h \in H \},
    \]
    is finite dimensional. In this case we say that $\pi$ has {\it finite
    weight} with respect to $(H, \chi)$. We also need the following results.
 \begin{proposition}\label{prop: reduction}
 	
 	Let $(\pi, V)$ be an irreducible representation of $G$ such that
 	$V_H(\chi) \neq 0$. Then there exists a non-trivial $G$-linear map, also called intertwining operator, between $G$-spaces $\mathrm{Ind}_H^G(\chi)$ and $\pi$.
 \end{proposition}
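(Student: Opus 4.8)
The statement is a form of Frobenius reciprocity for finite induction, so rather than invoking an abstract adjunction I would exhibit an explicit nonzero intertwiner. Since $V_H(\chi) \neq 0$, fix a nonzero weight vector $v_0 \in V_H(\chi)$, so that $\pi(h) v_0 = \chi(h) v_0$ for every $h \in H$, and let $R \subseteq G$ be a set of representatives for the right cosets $H \backslash G$. The candidate map $\Phi \colon \ind_H^G(\chi) \to V$ is
\[
\Phi(f) = \sum_{g \in R} f(g)\, \pi(g^{-1}) v_0 .
\]
By condition~(2) of Definition~\ref{def:finite-induction} each $f$ is supported on finitely many right cosets of $H$, so the sum is finite and $\Phi(f) \in V$.

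The first thing I would verify is that $\Phi(f)$ is independent of the chosen coset representatives. Replacing a representative $g$ by $h_0 g$ with $h_0 \in H$, the defining relation of the induced representation turns $f(g)$ into $\chi(h_0^{-1}) f(g)$, while the weight relation shows that $\pi(h_0^{-1}) v_0$ is a character multiple of $v_0$, supplying a compensating factor in $\pi((h_0 g)^{-1}) v_0 = \pi(g^{-1})\pi(h_0^{-1}) v_0$; the two characters are mutually inverse, so the summand is unchanged. This is the one step in which the finite-weight hypothesis is genuinely used. I would next check $G$-equivariance: for $g_1 \in G$ one has $(\widetilde{\rho}(g_1) f)(x) = f(x g_1)$, and since right multiplication by $g_1$ merely permutes the right cosets $H \backslash G$, reindexing the sum and pulling out $\pi(g_1)$ (using the independence of representatives just established) gives $\Phi(\widetilde{\rho}(g_1) f) = \pi(g_1)\, \Phi(f)$.

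Finally, to see that $\Phi \neq 0$, I would evaluate it on the distinguished function $f_0$ supported on the single coset $H$, given by $f_0(h) = \chi(h^{-1})$ for $h \in H$. Taking $1 \in R$ as the representative of $H$, every summand except the one indexed by $H$ vanishes, and $\Phi(f_0) = v_0 \neq 0$. Thus $\Phi$ is a nonzero $G$-linear map between $\ind_H^G(\chi)$ and $\pi$, as claimed.

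The only delicate point, and the step I would treat most carefully, is the character bookkeeping in the independence-of-representatives check: whether $\chi$ or $\chi^{-1}$ appears there is dictated by the precise left/right and inverse conventions in Definition~\ref{def:finite-induction}, and these must be matched so that the character from the induction relation and the character from the weight vector are exactly inverse to one another. If the conventions make them disagree, one simply replaces $\chi$ by $\chi^{-1}$ throughout, which is harmless for the intended application, where the conclusion serves only to produce a nonzero intertwiner (and hence, once both spaces are known to be irreducible, an isomorphism via Schur's lemma). Everything else---finiteness of the sum, $G$-equivariance, and nonvanishing---is formal.
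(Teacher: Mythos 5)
Your map $\Phi(f)=\sum_{g\in R}f(g)\,\pi(g^{-1})v_0$ is exactly the paper's intertwiner $F$ (which is defined on the basis $f_i$ by $f_i\mapsto\pi(g_i^{-1})v$ and extended linearly), so this is the same approach and it works. The one caveat is precisely the convention point you flagged: under the literal Definition~\ref{def:finite-induction}, where $f(h_0g)=\chi(h_0^{-1})f(g)$, the two factors in your representative-independence check are \emph{both} $\chi(h_0^{-1})$ rather than mutually inverse, so the summand scales by $\chi(h_0)^{-2}$ and one must make the $\chi\leftrightarrow\chi^{-1}$ adjustment you describe (or fix the coset representatives once and for all, as the paper does); note that the paper's own computation~(\ref{g-action}) silently makes the same switch by writing $\chi(h)$ where the stated definition gives $\chi(h^{-1})$.
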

 \begin{proof}
Let $\rho$ denote the representation $\ind_H^G(\chi).$
 	Let $X = \{g_i \mid i \in \mathbb I \}$ be the right coset
 	representatives of $H$ in $G$. For a fixed $v \in V_H(\chi)$, let
 	$W$ be the one dimensional space generated by $v$. We define
 	functions $f_i: X \rightarrow W$ by $f_i(g_j) = \delta_{i,j}(v)$ and
 	extend these to $G$ so that $f_i \in \mathrm{Ind}_H^G(\chi)$ for
 	all $i \in \mathbb I$. Then any $f \in \mathrm{Ind}_H^G(\chi)$ can
 	be written as linear combination of $f_i$'s. For any $g \in G$, we
 	have
 	\begin{eqnarray}\label{g-action}
 	\rho(g)f_i(hg_j) = f_i(hg_jg) = \chi(h) f_i(g_jgg_{i}^{-1}g_i)
 	\end{eqnarray}
 	
 	Therefore $\rho(g)f_i$ is nonzero on $g_j$ for $g_j$ satisfying
 	$g_jg \in H{g_i}$. Thus $\rho(g)f_i = \chi(g_jgg_{i}^{-1})f_j$ for
 	$j$ such that $g_jg \in H{g_i}$. Now define $F:
 	\mathrm{Ind}_H^G(\chi) \rightarrow \pi$ by $F(f_i) =
 	\pi(g_i^{-1})v$ on $f_i$'s and extended linearly thereof. Then,
 	\[
 	F(\rho(g)f_i) = F(\chi(g_jgg_{i}^{-1}) f_j) = \chi(g_jgg_{i}^{-1})\pi(g_j^{-1})v = \pi(g) \pi(g_i^{-1})v.
 	\]
 	This shows that $F \in \mathrm{End}_G(\mathrm{Ind}_H^G(\chi), \pi)$ is a
 	non-zero intertwiner.
 	%and therefore $\mathrm{Ind}_H^G(\chi) \cong
 	%\pi $.
 \end{proof}

%\footnote{add about Hirsch length, existence of normal subgroup with given properties of infinite supersolvable group}

\begin{lemma}
\label{diamond-lemma}

Let $G$ be a group. Let $H$ and $K$ be two subgroups of $G$
such that $K$ normalizes $H$. Let $\chi$ be a one dimensional representation of $H$ that is stable under the conjugation action of $K$. Let $\delta$ be a one dimensional representation of $K$ such that $\chi|_{H \cap K} = \delta|_{H \cap K}$.
%
%\begin{enumerate}
%
%\item $kHk^{-1} \subseteq H$ for all $k \in K$, i.e. $K$
%normalizes $H$.
%
%\item $\chi(khk^{-1}) = \chi(h)$ for all $h \in H$ and $k \in K$.
%
%\item $\chi|_{H \cap K} = \delta|_{H \cap K}$
%
%\end{enumerate}
Then $\tilde{\chi}: HK \rightarrow \mathbb C^{\times}$ defined by
$\tilde{\chi}(hk) = \chi(h) \delta(k)$ for all $h \in H$ and $k \in
K$ is a character of $HK$ such that $\tilde{\chi}|_{H} = \chi$.

\end{lemma}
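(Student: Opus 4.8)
The plan is to verify the three things the statement actually asserts: that $\tilde{\chi}$ is well defined as a function on $HK$, that it is a group homomorphism, and that it restricts to $\chi$ on $H$. First I would record that $HK$ is a subgroup of $G$: since $K$ normalizes $H$, we have $HK = KH$, so this set is closed under products and inverses, and moreover $H$ is normal in $HK$. The restriction claim $\tilde{\chi}|_H = \chi$ is then immediate, by writing each $h \in H$ as $h \cdot 1$ with $1 \in K$ and using $\delta(1) = 1$.

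The first genuine point, and the one I expect to be the main obstacle, is well-definedness. An element of $HK$ need not have a unique factorization $hk$ with $h \in H$ and $k \in K$ when $H \cap K \neq \{1\}$, so I must check that the value $\chi(h)\delta(k)$ is independent of the chosen factorization. Suppose $h_1 k_1 = h_2 k_2$. Then $h_2^{-1} h_1 = k_2 k_1^{-1}$ lies in $H \cap K$; call this element $u$. Substituting $h_1 = h_2 u$ and $k_1 = u^{-1} k_2$ and using that $\chi$ and $\delta$ are multiplicative on their own domains gives $\chi(h_1)\delta(k_1) = \chi(h_2)\delta(k_2)\,\chi(u)\delta(u)^{-1}$. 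This is exactly where the hypothesis $\chi|_{H \cap K} = \delta|_{H \cap K}$ enters decisively: it forces $\chi(u) = \delta(u)$, so the correction factor $\chi(u)\delta(u)^{-1}$ equals $1$ and the two values coincide.

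The remaining step is to check multiplicativity. Given $h_1 k_1, h_2 k_2 \in HK$, I would first rewrite the product in the standard form (element of $H$)(element of $K$) by conjugating $h_2$ past $k_1$: setting $h_2' = k_1 h_2 k_1^{-1}$, which lies in $H$ because $K$ normalizes $H$, one has $(h_1 k_1)(h_2 k_2) = (h_1 h_2')(k_1 k_2)$. Then $\tilde{\chi}$ of the product equals $\chi(h_1)\chi(h_2')\delta(k_1)\delta(k_2)$. Here the second hypothesis, that $\chi$ is stable under the conjugation action of $K$, is precisely what is needed: it yields $\chi(h_2') = \chi(k_1 h_2 k_1^{-1}) = \chi(h_2)$, after which the factors regroup as $[\chi(h_1)\delta(k_1)][\chi(h_2)\delta(k_2)]$, establishing the homomorphism property. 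Thus the only subtle points are the two compatibility hypotheses, each feeding exactly one of the two nontrivial verifications, while everything else is formal.
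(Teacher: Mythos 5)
Your proof is correct and follows the same route as the paper: the well-definedness check via $h_2^{-1}h_1 = k_2k_1^{-1} \in H \cap K$ and the hypothesis $\chi|_{H\cap K} = \delta|_{H\cap K}$ is exactly the paper's argument, and your multiplicativity verification using $K$-stability of $\chi$ simply spells out what the paper dismisses as "the rest follows easily."
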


\begin{proof}
	The one dimensional  representation $\tilde{\chi}$ is well defined because $h_1 k_1 = h_2 k_2$ implies $h_2^{-1} h_1 = k_2 k_1^{-1} \in H \cap K$. Therefore $\chi(h_2)^{-1} \chi(h_1) = \delta (k_2) \delta(k_1)^{-1}$. The rest of the proof follows easily.
\end{proof}

\section{Creterion for Schur irreducibiity of induced representation}
\label{sec: kutzko}

In this section, we deduce a sufficient condition on pair $(H, \chi)$ that implies that the
induced representation $\ind_H^G(\chi)$ is Schur irreducible.

Let $G$ be a finitely generated discrete group and $H$
be a subgroup of $G$. Let $(\chi, W)$ be a one dimensional representation of $H$. For $g \in G$, let
$\chi^{g} : g^{-1}Hg \rightarrow \mathbb C^\times$ be the one dimensional representation of $g^{-1}Hg$, defined by $\chi^g(g^{-1}hg) = \chi(h)$.
By $\mathrm{Hom}_{H \cap g^{-1}Hg }(\chi, \chi^{g})$, we mean the
space of $(H \cap (g^{-1}Hg))$-linear maps from $(\chi, W)$ to
$(\chi^{g}, W)$ when both of these are viewed as
representations of the group $H \cap g^{-1}Hg$. The following result regarding the space of intertwining operators of induced representations is due to Kutzko~\cite{MR0442145}. Its unitary analogue was proved by
Mackey~\cite{MR0042420} in early 50's.

\begin{thm}(Kutzko~\cite{MR0442145})\label{thm:kutzko} Let $G$ be a discrete group and let $H$ be a subgroup of $G$. Let $(\chi, W)$ be a one dimensional representation of $H$. Then the following are true.
	\begin{enumerate}
		
		\item  The space $\mathrm{End}_G(\ind_H^G(\chi))$
		is isomorphic to the following space of functions.
		\[
		\Delta = \left\{
		\begin{array}{ccc}
	s : G \rightarrow \mathrm{End}_{\mathbb C}(W) &  \mid &
	\begin{array}{c} s(h_1gh_2) = \chi(h_1) s(g) \chi(h_2), \\ \forall \,\, h_1, h_2 \in H, \,\forall \,\, g
	\in G \end{array}
		\end{array}
		\right\}.
		\]
		\item  $\mathrm{End}_G(\ind_H^G(\chi)) \cong \oplus_{g \in H \backslash G /H} \mathrm{Hom}_{H \cap g^{-1}Hg }(\chi, \chi^{g})$
		
	\end{enumerate}
	
\end{thm}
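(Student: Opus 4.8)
The plan is to prove part~(1) by identifying a $G$-intertwiner with its ``kernel'', a bi-equivariant function on $G$, and then to prove part~(2) by decomposing such kernels over the double cosets $H\backslash G/H$; this is the discrete Mackey machine adapted to the compactly supported induction of Definition~\ref{def:finite-induction}.

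For part~(1), I would first fix the distinguished element $e\in\ind_H^G(\chi)$ supported on the single coset $H$ and normalized by $e(h)=\chi(h^{-1})$ for $h\in H$. The translates $\widetilde{\rho}(g)e$ are supported on single cosets, and since every element of $\ind_H^G(\chi)$ has finite support modulo $H$, these translates generate $\ind_H^G(\chi)$ as a $G$-module. Given $T\in\End_G(\ind_H^G(\chi))$ I would attach the function $s_T(g):=(Te)(g^{-1})\in\End_{\bC}(W)$. The left $H$-equivariance of the member $Te$ of the induced space gives the right-hand relation $s_T(gh_2)=s_T(g)\chi(h_2)$, while the $G$-equivariance of $T$ (applied to the identity $\widetilde{\rho}(h_1)e=\chi(h_1^{-1})e$) gives the left-hand relation $s_T(h_1g)=\chi(h_1)s_T(g)$; together these say $s_T\in\Delta$. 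Injectivity of $T\mapsto s_T$ is immediate from the fact that $e$ generates. For surjectivity one reconstructs $T$ from $s\in\Delta$ by convolution against $s$, the normalization being forced by $(T_se)(g^{-1})=s(g)$; the only thing to check with care is that $T_s$ sends compactly supported functions to compactly supported functions and is $G$-equivariant.

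For part~(2), I would note that a function $s\in\Delta$ is determined by its values at a set of double-coset representatives $g\in H\backslash G/H$, these values being independent across distinct double cosets. Fixing $g$, the relations $h_1gh_2=g$ with $h_1,h_2\in H$ are governed by $H\cap g^{-1}Hg$: writing $h_1=gh_2^{-1}g^{-1}$, the bi-equivariance forces $s(g)=\chi(h_1)\,s(g)\,\chi(h_2)=\chi^g(h_2)^{-1}\chi(h_2)\,s(g)$ for every $h_2\in H\cap g^{-1}Hg$. Hence $s(g)$ must be a linear map $W\to W$ intertwining the two one-dimensional actions $\chi$ and $\chi^g$ of $H\cap g^{-1}Hg$, that is $s(g)\in\Hom_{H\cap g^{-1}Hg}(\chi,\chi^g)$, and conversely any such choice extends to a well-defined element of $\Delta$ on that double coset. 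Summing over double-coset representatives yields the stated isomorphism, the coproduct (rather than a product) appearing precisely because an honest intertwiner corresponds to a kernel supported on finitely many double cosets.

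The step I expect to be the main obstacle is this last finiteness bookkeeping: one must verify that the compact-support condition built into Definition~\ref{def:finite-induction} translates exactly into finite double-coset support for the kernel $s_T$ (so that $Te$, lying in the induced space, meets only finitely many double cosets) and, conversely, that only finitely supported kernels define compact-support-preserving operators $T_s$. This is what distinguishes the present statement, with its direct sum in~(2), from the analogous computation for full (non-compact) induction, and it is also where the inverse and transpose conventions of the induced model must be tracked with care.
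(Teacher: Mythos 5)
Your proposal follows essentially the same route as the paper's own (sketched) proof: both identify a $G$-intertwiner with its bi-$\chi$-equivariant kernel $s$ by evaluating on the canonical generator of $\ind_H^G(\chi)$ supported on the coset $H$ (the paper's $f^w$, your $e$, differing only by an inverse/normalization convention), and both then decompose $\Delta$ over double cosets and identify the piece supported on $HgH$ with $\Hom_{H\cap g^{-1}Hg}(\chi,\chi^{g})$ via evaluation at $g$. The finite-support bookkeeping you single out as the delicate point is precisely what the paper's outline also leaves implicit, so there is no substantive difference in method.
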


\begin{proof} Here we outline some ideas of the proof which we will use in the proof of Theorem~\ref{thm: reduction-to-normalizer}. For details see(\cite{MR0442145}, p.2).
	For any $w \in W$, define the set of functions $f^w:G \rightarrow
	W$ by the following.
	\[
	f^w(x) = \begin{cases} \chi(x)w & \text{for} \,\, x \in H \\ 0  & \text{otherwise}. \end{cases}
	\]
	Then
	$f^w \in \ind_H^G(\chi)$. For $\psi \in  \mathrm{End}_G(\ind_H^G(\chi))$, define $ s_\psi: G \rightarrow \mathrm{End}_{\mathbb C}(W)$ by  $s_\psi(g)(w) =
	\psi f^w(g).$
	
	Then $\Psi: \psi \mapsto s_\psi$ gives the required isomorphism
	between $\mathrm{End}_G(\ind_H^G(\chi))$ and $\Delta$. As for the second part, for each $g \in H \backslash G /H $, let $\Delta_{g}$ consists of
	functions in $\Delta$ that have their support on $HgH$. Then
	$\Delta \cong \oplus_{g \in H \backslash G /H } \Delta_{g}$.
	
	For all $g \in H \backslash G /H$, we have
	$\Delta_{g}$ is isomorphic to  $\mathrm{Hom}_{H \cap g^{-1}Hg }(\chi, \chi^{g})$ with isomorphism given by $s \mapsto
	s(g)$.
\end{proof}
\begin{remark} 
\label{rk:general-intertwiner} In general, when  $\rho$ is any representation of a subgroup $H$ of $G,$ the following result is true by {\it Mackey's formula} (see \cite[Chapter~1, Section~5.5, 5.7]{Vigneras96}, also see \cite{Beloshapka-Gorchinskiy}.
\begin{eqnarray}
\label{eq:mackey-formula}
\ind_H^G(\rho)|_{H} \cong \oplus_{g \in H \backslash G /H } \ind_{H \cap H^g}^H(\rho^g|_{H \cap H^g}).
\end{eqnarray}
This gives the following description of $G$-linear maps of $\ind_H^G(\rho)$. 
\begin{eqnarray}
\mathrm{End}_G(\ind_H^G(\rho)) \cong \oplus_{g \in H \backslash G/  H} \mathrm{Hom}_H(\rho, \ind_{H^g \cap H}^H(\rho^g|_{H^g \cap H})  ).
\end{eqnarray}
Further in case the index of $H$ in $G$ is finite and $\pi$ is an arbitrary representation of $G$, then the following vector spaces are canonical isomorphic (see \cite[Chapter~1, Section~5.4]{Vigneras96}).
\begin{eqnarray}
\label{frobenius-reciprocity}
\mathrm{Hom}_G(\pi, \ind_H^G(\rho) \cong \mathrm{Hom}_H(\pi|_H, \rho).
\end{eqnarray}

\end{remark}

\begin{definition}(Radical)
\label{def:radical} The radical of a subgroup $H$ of $G$, denoted by $\sqrt[G]{H}$, is the set given by the following.
\[
\sqrt[G]{H} = \{g \in G \mid g^n \in H\,\, \mathrm{for\,\, some}\,\, n \in \mathbb N \}.
\]
\end{definition}
Note that radical need not be a group in general. However situation is nice for the case of finitely generated nilpotent groups. The following facts about radical are crucial for us.
\begin{lemma}
\label{lem: baumslag} Let $K$ be a finitely generated nilpotent group.
 Then for any subgroup $H$ of $K$, the following results are true.
	\begin{enumerate}
		
		\item The set $\sqrt[K]{H}$ is a subgroup of $K$ and index $[\sqrt[K]{H} : H] < \infty$.
		\item $\sqrt[K]{(\sqrt[K]{H})} = \sqrt[K]{H}$.
		\item $\sqrt[K]{N_K(H)} = N_K(\sqrt[K]{H})$.
\end{enumerate}
	
\end{lemma}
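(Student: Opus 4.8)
The plan is to treat the three assertions in increasing order of difficulty, writing $L=\sqrt[K]{H}$ throughout. For (1), that $\sqrt[K]{H}$ is a subgroup is Mal'cev's theorem on radicals (isolators) of subgroups of nilpotent groups; I would either cite it or reprove it by induction on the nilpotency class, the only nontrivial point being closure under multiplication, which follows from the Hall--Petrescu collection formula $(ab)^m = a^m b^m c_2^{\binom{m}{2}} c_3^{\binom{m}{3}}\cdots$ with the $c_i$ lying in progressively deeper terms of the lower central series (so that they vanish past the class). For the index claim, $L$ is a subgroup of a finitely generated nilpotent group, hence itself finitely generated nilpotent, and every element of $L$ has a power in $H$, so $\h(H)=\h(L)$; the standard fact that for $A\le B$ finitely generated nilpotent with $\h(A)=\h(B)$ one has $[B:A]<\infty$ then gives $[\sqrt[K]{H}:H]<\infty$. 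Assertion (2) is immediate from the definition via transitivity of ``some power lies in'': if $g^k\in\sqrt[K]{H}$ then $(g^k)^j\in H$ for some $j$, whence $g^{kj}\in H$ and $g\in\sqrt[K]{H}$.

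The substance is (3), which I would prove by two inclusions. First note that conjugation is an automorphism and commutes with taking radicals, so $N_K(H)\subseteq N_K(L)$ and hence $\sqrt[K]{N_K(H)}\subseteq\sqrt[K]{N_K(L)}$. For the inclusion $N_K(L)\subseteq\sqrt[K]{N_K(H)}$, take $g\in N_K(L)$. Then conjugation by $g$ is an automorphism of the finitely generated group $L$, and by part (1) $H$ is a subgroup of $L$ of finite index $[L:H]$. By Lemma~\ref{finite-finite.index} there are only finitely many subgroups of $L$ of that index, and $g$ permutes them; hence the orbit of $H$ is finite and $g^n H g^{-n}=H$ for some $n\ge 1$, i.e. $g^n\in N_K(H)$ and $g\in\sqrt[K]{N_K(H)}$. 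This direction uses only tools already in the paper.

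The remaining inclusion $\sqrt[K]{N_K(H)}\subseteq N_K(L)$ is where the real work lies, and I expect it to be the main obstacle. Given $g$ with $g^n\in N_K(H)$, the opening remark gives $g^n\in N_K(L)$, so everything reduces to the statement that \emph{the normalizer of an isolated subgroup is isolated}: from $g^n\in N_K(L)$ with $\sqrt[K]{L}=L$ one must deduce $g\in N_K(L)$. I would establish this in two steps. Since the torsion subgroup $T$ of $K$ is finite and normal and is contained in every isolated subgroup (a torsion element $t$ satisfies $t^m=1\in L$, so $t\in\sqrt[K]{L}=L$), passing to $K/T$ reduces the claim to the torsion-free case. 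There I would use the Mal'cev completion $K^{\mathbb{Q}}$: via the Baker--Campbell--Hausdorff correspondence its radicable subgroups are exactly those coming from Lie subalgebras, the radicable hull $L^{\mathbb{Q}}$ of $L$ is such a subgroup with $L^{\mathbb{Q}}\cap K=L$, and the normalizer of a subalgebra is again a subalgebra, so $N_{K^{\mathbb{Q}}}(L^{\mathbb{Q}})$ is radicable and hence closed under extraction of roots. As roots are unique in a torsion-free nilpotent group, $g^n\in N_{K^{\mathbb{Q}}}(L^{\mathbb{Q}})$ forces its unique $n$-th root $g$ into $N_{K^{\mathbb{Q}}}(L^{\mathbb{Q}})$; intersecting with $K$ and using $g\in K$ then yields $g\in N_K(L)$. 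The delicate points to check carefully are the clean behaviour of radicals, normalizers and radicable hulls under the quotient by $T$ and under intersection with $K$.
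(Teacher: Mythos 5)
Your proposal is essentially correct, but it is worth noting that the paper does not actually prove this lemma: its ``proof'' is a one\-/line citation to Baumslag and to Brown's Lemma~4, so any honest argument here is necessarily your own reconstruction. What you write is a legitimate and complete route. Parts (1) and (2) are handled exactly as in the classical isolator theory (Mal'cev/Hall--Petrescu for closure, equality of Hirsch lengths for the finite index; the latter is most cleanly justified by a short induction down the lower central series, which your Hirsch-length remark compresses). For part (3), your inclusion $N_K(\sqrt[K]{H})\subseteq\sqrt[K]{N_K(H)}$ via Hall's finiteness of subgroups of given index is precisely the argument the paper itself uses elsewhere and is the ``cheap'' direction. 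The reverse inclusion, which you correctly identify as the real content (the normalizer of an isolated subgroup is isolated), you settle by passing to the Mal'cev completion and the Lie algebra correspondence; this works, with the one bookkeeping point you already flag, namely that $g$ normalizing $L^{\mathbb Q}$ together with $L^{\mathbb Q}\cap K=L$ and $g\in K$ yields $g\in N_K(L)$. The cited sources (and most textbook treatments) instead run an induction on the nilpotency class along the upper central series, which stays entirely inside finitely generated groups; your completion argument buys a shorter and more conceptual proof at the cost of importing the Baker--Campbell--Hausdorff machinery and the fact that $N_{K^{\mathbb Q}}(\exp\mathfrak h)=\exp\bigl(N_{\mathfrak g}(\mathfrak h)\bigr)$, which you should either cite or verify. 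Either way, the logical structure is sound and nothing essential is missing.
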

\begin{proof}
	Proof follows from Baumslag~\cite[Lemma~2.8]{MR0283082} and Brown~\cite[Lemma 4]{MR0352324}.
\end{proof}
We remark that above results do not hold for the class of supersolvable groups. Next we prove Theorem~\ref{thm: reduction-to-normalizer}.

For the case when $G$ is supersolvable, the following gives a sufficient condition on the pair $(H, \chi)$ such that $\ind_H^G(\chi)$ is Schur irreducible.
%The proof of above theorem, in particular the definition of $s_A$, infact gives the following very important result.

%\begin{thm}\label{prop: reduction-to-normalizer} Let $G$ be a supersolvable group. Let $H$ be a subgroup of $G$ and let $\chi$ be a one dimensional representation of $H$. If $\chi^g|_{H^g \cap H} \neq \chi|_{H^g \cap H}$ for all $g \in N_G(\sqrt[F]{H\cap F} ) \setminus H$, then $\mathrm{Ind}_H^G(\chi)$ is Schur irreducible.
%\end{thm}

\begin{proof}[Proof of Theorem~\ref{thm: reduction-to-normalizer}]By Theorem~\ref{thm:kutzko}, it is enough to prove that if $g\in G$ such that $I(\chi, \chi^g) \neq 0$ then $g \in N_G(\sqrt[F]{H \cap F})$.

	Suppose $I(\chi, \chi^g) \neq 0$. By Theorem~\ref{thm:kutzko}, we have $I(\chi, \chi^g) \cong \Delta_g \subseteq \Delta$. Therefore by proof of Theorem~\ref{thm:kutzko}, there exists $s \in \Delta$ such that $s(g) \neq 0$. This implies $s(gh^i) = s(g) \chi(h^i)$ is non-zero for all $i$. By the given isomorphism between $\Delta$ and $\mathrm{End}_G(\ind_H^G(\chi))$, as given in proof of Theorem~\ref{thm:kutzko}, we have $s = s_\psi$ for some $\psi \in  \mathrm{End}_G(\ind_H^G(\chi))$. This infers, $\psi f^w(g h^i) \neq 0$ for all $i$. But $\psi f^w \in \ind_H^G(\chi)$ and therefore, by definition of induced representation, is non-zero only for finitely many right cosets of $H$ in $G$. From this it follows that the sets $Hg h^i$ can not be mutually distinct right cosets of $H$ in $G$ for all $i$. Hence, for $h \in H$ and $g \in G$, there exists $\kappa(h,g)$, depending on $h$ and $g$, such that
	$H gh^{\kappa(h,g)} = Hg$. Therefore $g h^{\kappa(h, g)}g^{-1} \in H$.
	%	From Proposition~\ref{prop: kutzko-deduction}, we
	%	have $I(\chi^g, \chi) \neq 0$ implies that $(g h g^{-1})^{m_{h,g}} \in H$ for some $m_{h,g}$ and for all $h \in H$.
	
	Let $F$ be the Fitting subgroup of $G$, by Lemma~\ref{fitting-subgroup}, there exists $k$ such that $|H/(H \cap F)| = k$. Consider $ u \in \sqrt[F]{H \cap F}$. By Lemma~\ref{fitting-subgroup} and Lemma~\ref{lem: baumslag}, there exists $t$ such that $u^t \in H \cap F \subseteq H$.  Combining this all, we obtain $(g(u^{tk})g^{-1})^{\kappa(u^t, g)} \in H \cap F$ and therefore $g \in N_G(\sqrt[F]{H \cap F})$.
	
\end{proof}
%
%\begin{proposition}
%	\label{prop: kutzko-deduction}
%	
%	Any $g \in G$ such that $I(\chi, \chi^{g})  \neq 0$ has the property that for all $h \in H$ there exists $m(h) \in \mathbb N$
%	(depending on $h$), such that $(g h g^{-1})^{m(h)} \in H$.
%\end{proposition}

%\end{proof}

%
%
%In case the group $G$ is a finitely generated supersolvable group, the Proposition~\ref{prop: reduction-to-normalizer} can be simplified to give an important condition on a subgroup $H$ and its character $\chi:H
%\rightarrow \mathbb C^{\times}$ such that $\mathrm{ind}_H^G(\chi)$
%is irreducible as follows.

%\begin{proof}

\section{Existence of Schur irreducible induced representation}
\label{sec: existence}
Let $\rho$ be an irreducible
representation of a discrete supersolvable group $G$ of finite weight. In this section, we prove existence of
a subgroup $H'$ and $\chi': H' \rightarrow \mathbb C^{\times}$
such that the following hold
\begin{enumerate}
	\item $V_{H'}(\chi') \neq 0$.
	\item $\ind_{H'}^{G'}(\chi') $ is Schur irreducbile.
\end{enumerate}

Firstly, we dispose of the case of finite dimensional irreducible representations of supersolvable groups.
\begin{proposition}
\label{prop: finite-dimensional}
Every finite dimensional irreducible representation of a discrete supersolvable group is monomial.
\end{proposition}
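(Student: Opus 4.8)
The plan is to prove that every finite dimensional irreducible representation $(\pi, V)$ of a discrete supersolvable group $G$ is monomial by induction on $\dim V$ (or equivalently on the Hirsch length $\h(G)$), reducing at each stage to a proper subgroup from which the representation is induced. The base case $\dim V = 1$ is immediate, since a one dimensional representation is induced from itself. For the inductive step, the key structural input is that a supersolvable group always admits a normal subgroup $N$ with cyclic quotient $G/N$ sitting at the top of the invariant series \eqref{eq:invarint-series}; more usefully, by Lemma~\ref{lem: infinite-cyclic} and the definition of supersolvability there is a cyclic normal subgroup $A \trianglelefteq G$ that is noncentral whenever $\pi$ is not already a character factoring through an abelian quotient.

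The heart of the argument mimics the classical proof for finite supersolvable groups (Serre~\cite[Ch.8]{MR0450380}), adapted to the discrete setting. First I would locate an abelian normal subgroup $A$ of $G$ such that $\pi|_A$ is not a multiple of a single character; such an $A$ exists because if $\pi|_A$ were scalar on \emph{every} abelian normal subgroup, then in particular on the cyclic normal subgroup provided by Lemma~\ref{lem: infinite-cyclic}(2) (or on a minimal term of the series), one could push $\pi$ down to a quotient of strictly smaller Hirsch length and invoke the inductive hypothesis. Assuming $\pi|_A$ is nonscalar, I would decompose $V = \bigoplus_{\xi} V_\xi$ into $A$-isotypic components indexed by the finitely many characters $\xi$ of $A$ occurring (finiteness is guaranteed since $\dim V < \infty$ and $A$ is abelian). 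Since $A$ is normal, $G$ permutes these isotypic components by the conjugation action $g \cdot \xi = \xi^{g}$, and by irreducibility this action is transitive on the support $\{\xi : V_\xi \neq 0\}$.

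Let $H = \Stab_G(\xi_0)$ be the stabilizer of one occurring character $\xi_0$. Since $A$ is noncentral and $\pi|_A$ is nonscalar, the orbit has size greater than one, so $H$ is a \emph{proper} subgroup of $G$ containing $A$. By the standard Clifford-theoretic argument, the isotypic piece $V_{\xi_0}$ is an $H$-subrepresentation and $\pi \cong \ind_H^G(V_{\xi_0})$; moreover $V_{\xi_0}$ is an irreducible $H$-representation. Crucially $H$, being a subgroup of a supersolvable group, is itself supersolvable, and $\dim V_{\xi_0} = \dim V / [G:H] < \dim V$. By the inductive hypothesis applied to the supersolvable group $H$, the representation $V_{\xi_0}$ is monomial, say $V_{\xi_0} \cong \ind_{H'}^{H}(\chi')$ for some subgroup $H' \leq H$ and character $\chi'$. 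Transitivity of induction then gives $\pi \cong \ind_{H'}^{G}(\chi')$, completing the step.

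I expect the main obstacle to be verifying that the Clifford-theoretic decomposition survives in the discrete, possibly infinite group setting and that the induction is genuinely well-founded. Two points need care: first, one must ensure the inductive parameter strictly decreases, which requires showing that whenever $\pi$ is not already one dimensional one can always find a noncentral abelian normal subgroup producing a proper stabilizer $H$ with $[G:H] > 1$; here the failure case (where every such $A$ acts scalarly) must be handled separately by factoring through an abelian quotient and using that $G/[G,G]$ together with supersolvability forces $\pi$ to be a character. Second, since $G$ is infinite one cannot rely on Maschke-type averaging; however, because $\dim V < \infty$ the $A$-action is diagonalizable into finitely many isotypic spaces regardless, and the orbit of characters under $G$ is finite (being a subset of a finite set), so the index $[G:H]$ is finite and the finite induction of Definition~\ref{def:finite-induction} behaves exactly as in the classical finite case. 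Once these finiteness facts are in place, the argument reduces cleanly to the finite supersolvable template.
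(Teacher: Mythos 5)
Your proposal is correct and follows essentially the same route as the paper: both arguments locate a noncentral abelian normal subgroup (the paper takes the largest noncentral term $N_i$ of the invariant series and reduces to the faithful case, where you instead quotient out when the action is scalar), pass to the stabilizer of an occurring character, identify $\pi$ with the representation induced from the corresponding eigenspace by Clifford theory, and conclude by induction on the dimension together with transitivity of induction.
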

\begin{proof} This proof is on the lines of an analogous result of finite supersolvable groups (Serre~\cite[Ch.8]{MR0450380}) and is motivated by the analogous result for unitray representations of finitely generated discrete nilpotent groups as given by Brown~\cite[Lemma~1]{MR0352324}. Let $G$ be a discrete supersolvable group and let $H$ be an abelian normal non-central subgroup of $G$. The existence of such group is justified by taking largest $i$ in the invariant series ~(\ref{eq:invarint-series}) of $G$ such that $N_i$ is not contained in the centre of $G$. Then $N_i$ is a normal abelian non-central subgroup of $G$.

Let $(\pi, V)$ be a finite dimensional complex irreducible representation of $G$. Without loss of generality, we can further assume that $\pi$ is faithful. Representation $\pi$ is finite dimensional, therefore there exists a character $\chi$ of $H$ such that $\mathrm{Hom}_H(\pi, \chi) \neq 0$. Consider the subspace $V_H(\chi)$ of $V$. Note that $V_H(\chi)$ is a proper non-trivial subspace of $V$. Let $S = \{g \in G \mid \chi^g = \chi \}$ be the stabilizer subgroup of $\chi$. We note that $S$ is a proper subgroup of $G$ and the group $S$ acts on $V_H(\chi)$. Further any $g \in G \setminus S$ maps $V_H(\chi)$ to $V_H(\chi^g)$ and $V_H(\chi^g) \neq V_H(\chi)$.
%We also note that because $V$ is irreducible so $V_H(\chi)$ is an irreducible $S$-representation.
%Therefore by Remark~\ref{rk:general-intertwiner}, we have $\ind_S^G(V_H(\chi))$ is Schur irreducible. But then by Theorem~\ref{thm:subproximate-irreducible}, we obtain that $\ind_H^G(V_H(\chi))$ is an irreducible $G$-representation.

We claim that $ V \cong \ind_S^G(V_H(\chi))$ and $V_H(\chi)$ is irreducible as $S-$representations. Then induction hypothesis will imply the result because dimension of $V_H(\chi)$ is strictly less than that of $V$. For this, we note that because $V$ is finite dimensional index of $S$ in $G$ is finite. Therefore $\ind_S^G(V_H(\chi)) \cong \oplus_{g \in G \setminus S} V_H(\chi^g)$ and this is clearly a sub-representation of $V$. The representation $V$ is irreducible, therefore $ V \cong \ind_S^G(V_H(\chi))$. This in turn also implies $V_H(\chi)$ is an irreducible $S$-representation because $V$ is an irreducible $G$-representation.

\end{proof}

Now onwards we concentrate on the case when $\pi$ is an infinite dimensional irreducible representation having finite weight with respect to $(H, \chi)$. Recall that the $G$-space $\ind_{H}^{G}(\chi) $ is called Schur irreducible if $\mathrm{End}_G(\ind_{H}^{G}(\chi)) \cong \mathbb C$. In view of Theorem~\ref{thm: reduction-to-normalizer}, our aim is to find a subgroup $H'$ and its character $\chi'$ such that for all $g \in N_G(\sqrt[F]{H'\cap F}) \setminus H'$ we have $\chi'^g \neq \chi'$ on $H'^g \cap H'$. If pair $(H, \chi)$ itself satisfies this condition then we are done. Otherwise we will modify the pair $(H, \chi)$ in the following steps.
	\subsection{STEP-I} Suppose that there exists $g \in N_G(\sqrt[F]{H \cap F}) $ such that $\chi^g = \chi$ on $H \cap H^g$ and $g^i \notin H$ for any $i$.
	
	By Lemma~\ref{lem: baumslag}, the group $H \cap F$ is a finite index subgroup of $\sqrt[F]{H \cap F}$. Therefore there exists high enough of power of $g$, say $g^k$, such that $(H \cap F)^{g^k} = H \cap F$. We replace $g^k$ with $g$ to obtain $(H \cap F)^g = H \cap F$ and $\chi^g|_{H \cap F} = \chi|_{H \cap F}$.

	Now consider the group $H_1 = \langle H,g \rangle $, the group generated by $H$ and $g$. By the choice of $g$, we have $H_1/(H \cap F)$ is an infinite solvable subgroup. Therefore by Lemma~\ref{lem: infinite-cyclic} there exists a normal subgroup $H_2$ of $H_1$ such that $H_2/(H \cap F)$ is an infinite cyclic normal subgroup. Let $x(H\cap F)$ be a generator of $H_2/(H \cap F)$. Then for all $h \in H$, we have $(hxh^{-1} )(H \cap F)$ equals either $x (H \cap F)$  or $x^{-1} (H \cap F)$. This in turn implies that $xhx^{-1}(\hf)$ equals $h \hf$ or $(x^2h)H \cap F$.
	Therefore every element of the group $\hirschhigh{H}{x}$, group generated by $H$ and $x$, can be written as $x^ih$ for some $i \in \mathbb Z$ and $h \in H$. Collecting all these, we obtain that $\hirschhigh{H}{x}$ is an infinite supersolvable group with Hirsch length exactly one more than that of $H \cap F$. By Lemma~\ref{lem:index dihedral}, there exists a normal subgroup $N$ of $\hirschhigh{H}{x}$ such that $\hirschhigh{H}{x}/N$ is either an infinite cyclic or an infinite Dihedral group.

\begin{lemma}
		The group $HN/N$ is a finite abelian subgroup of $\hirschhigh{H}{x}/N$ such that either $|HN/N| = 1$ or $|HN/N| = 2$.
\end{lemma}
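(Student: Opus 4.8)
The plan is to identify $HN/N$ with a subgroup of the very rigid group $Q:=\hirschhigh{H}{x}/N$, which by construction is either infinite cyclic or the infinite dihedral group $D_\infty$ of Example~\ref{eg: infinite-dihedral}, and then to prove two things: first that this subgroup is \emph{finite}, and second that any finite subgroup of $\mathbb Z$ or of $D_\infty$ has order $1$ or $2$ (and is therefore abelian). Writing $\phi\colon \hirschhigh{H}{x}\to Q$ for the quotient map, we have $HN/N=\phi(H)\cong H/(H\cap N)$, so the whole statement is a statement about the image of $H$ in $Q$.

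The engine of the finiteness step is that $H/(H\cap F)$ is finite: indeed $H/(H\cap F)\cong HF/F$ is a subgroup of $G/F$, which is finite by Lemma~\ref{fitting-subgroup}. Put $k=[H:H\cap F]<\infty$. I would then control $\phi(H)$ through its subgroup $\phi(H\cap F)$, noting that $[\phi(H):\phi(H\cap F)]$ divides $k$, so $\phi(H)$ is finite once $\phi(H\cap F)$ is. The clean way to force the latter is to choose $N$ so that $H\cap F\subseteq N$; then $\phi(H\cap F)$ is trivial and $HN/N=\phi(H)$ is a quotient of the finite group $H/(H\cap F)$, hence finite of order at most $k$. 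Such an $N$ is available: since $H\cap F$ is normal in $\hirschhigh{H}{x}$ and $\h(\hirschhigh{H}{x})=\h(H\cap F)+1$ (established just before the statement), the quotient $\hirschhigh{H}{x}/(H\cap F)$ is an infinite supersolvable group of Hirsch length one, so Lemma~\ref{lem:index dihedral} applied to it produces a normal subgroup whose preimage $N$ contains $H\cap F$ and still satisfies $\hirschhigh{H}{x}/N\cong\mathbb Z$ or $\hirschhigh{H}{x}/N\cong D_\infty$.

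It then remains to bound a finite subgroup $A$ of $Q$. If $Q$ is infinite cyclic its only finite subgroup is trivial, so $|A|=1$. If $Q\cong D_\infty$, a direct computation with the multiplication $(s,z)(s',z')=(ss',s'z+z')$ of Example~\ref{eg: infinite-dihedral} shows that the involutions are exactly the elements $(-1,z)$, and that the product $(-1,z)(-1,z')=(1,z'-z)$ of two \emph{distinct} involutions has infinite order; hence a finite subgroup contains at most one involution and has order $1$ or $2$. In every case $|HN/N|\in\{1,2\}$, and a group of order at most two is abelian, which completes the proof.

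The one delicate point, and the step I would guard most carefully, is the reduction to finiteness of $\phi(H\cap F)$. For an \emph{arbitrary} normal subgroup $N$ with $\hirschhigh{H}{x}/N$ infinite cyclic or dihedral this can genuinely fail: $H\cap F$ could map onto the infinite cyclic (translation) part of $Q$, in which case $HN/N$ would be infinite. This is exactly why $N$ must be taken to contain $H\cap F$ — equivalently, $Q$ must be realised as a quotient of $\hirschhigh{H}{x}/(H\cap F)$ rather than of some other quotient of $\hirschhigh{H}{x}$. Once $H\cap F$ is killed, the finiteness of $H/(H\cap F)$ coming from Lemma~\ref{fitting-subgroup} carries the argument, and the order bound is immediate from the structure of $\mathbb Z$ and $D_\infty$.
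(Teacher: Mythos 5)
Your proof is correct and follows essentially the same route as the paper: finiteness of $HN/N\cong H/(H\cap N)$ comes from $H\cap F\subseteq N$ together with $[H:H\cap F]<\infty$ (via the finiteness of $G/F$), and the order bound comes from the fact that finite subgroups of $\mathbb Z$ and of $D_\infty$ have order at most $2$. The one point you add --- arranging $N\supseteq H\cap F$ by applying Lemma~\ref{lem:index dihedral} to the quotient $\hirschhigh{H}{x}/(H\cap F)$ and pulling back --- is a genuine improvement, since the paper simply asserts $(H\cap F)\subseteq(H\cap N)$ in the first line of its proof, and, as you correctly observe, this containment can fail for an arbitrary normal $N$ with infinite cyclic or infinite dihedral quotient.
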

	\begin{proof} We have $(H \cap F) \subseteq (H \cap N)$ and $(H \cap F)$ is a finite index normal subgroup of $H$. Now the finiteness of $HN/N$ follows because $HN/N \cong H/(H \cap N)$. If $\hirschhigh{H}{x}/N$ is infinite cyclic then $HN/N$ is trivial and when $\hirschhigh{H}{x}/N$ is infinite dihedral then either $HN/N$ is trivial or $HN/N$ is a group of order two.
	\end{proof}	
We continue with the construction of $(H', \chi').$  We also note that by definition of $N$ and $\hirschhigh{H}{x}$, the group $H\cap F$ is  normal in $N $ such
that $\h(N) = \h(H \cap F)$. Therefore index of $H \cap F$ in $N$ is finite. \\

{{\bf Case 1: $ \bf |HN / N| = 1$}}

	\label{sec:hn=n}  In this case, we have $H \cap N = H$ and therefore $H \subseteq N$. Further because $H \cap F \subseteq H \subseteq N$, index of $H$ in $N$ is finite. By Lemma~\ref{finite-finite.index}, there exists $r$ such that $H = H^{x^r}$. Therefore the infinite group generated by $x^r$ acts on irreducible representations of $H$ and $\chi = \chi^{x^{ri}}$ for all $i$ on $H \cap F$. By Proposition~\ref{prop: reduction}, we have 
	\[
	\mathrm{End}_H( \ind_{H\cap F}^H(\chi|_{H \cap F}) , \chi^{x^{ri}}) \neq 0 \,\,\forall \,\, i \geq 0.
	\] But $\ind_{H \cap F}^H(\chi|_{H \cap F})$ is finite dimensional and therefore $\chi = \chi^{x^{rj}}$ for some $j$.
	Let $K$ be the group generated by $x^{rj}$. The space $V_H(\chi)$ is finite dimensional and invariant under the action of $K$. Therefore there exists a character $\delta$, such that
	\[
	V_{H}(\chi) \cap V_{K}(\delta) \neq 0.
	\]
	We apply Lemma~\ref{diamond-lemma} for this $H$, $\chi$, $K$, $\delta$ and obtain a character
	$\chi'= \chi \delta $ of group $H' = HK$. we note that because $\chi'|_H = \chi$, $V_{H'}(\chi')$ is finite dimensional. Further $\h(H') > \h(H)$.\\
	
{\bf Case 2: $\bf |HN / N| = 2$}

As $HN/N \cong H/(H \cap N)$, we have $ H \cap N$ is an index two subgroup of $H$. Since $H \cap F \subseteq H \cap N$, so $H \cap N$ is also a finite index subgroup of $N$. Again by the same arguments as in Case-1, we can replace $x$ by some high power so that $H \cap N$ is normal in $\hirschhigh{H}{x}$ and $\chi^x|_{H \cap N} = \chi|_{H \cap N}$.
	
	Let $h \in H \setminus H \cap N$ be such that $h^2 = h'\in H \cap N$. If, $hxh^{-1} (H \cap F) = x(H \cap F)$ then it is easy to see that we can assume that $H$ is a normal subgroup of $\langle H, x \rangle$. Then as in Case 1, we can extend $\chi$ to $H' = \langle H, x \rangle$. We denote this extension by $\chi'$. We have $\chi'|_H = \chi$, therefore $V_{H'}(\chi')$ is finite dimensional. Further $\h(H') > \h(H)$.

	To deal with the remaining case we will assume that $hxh^{-1} (H \cap F) = x^{-1}(H \cap F)$. Consider the space $V_{H \cap N}(\chi|_{H \cap N})$ and define the following subspaces.
	\[
	W_\pm = \{v \in V_{H \cap N}(\chi|_{H \cap N}) \mid \pi(h)v = \pm \sqrt{\chi(h')}v\}.
	\]
	Then, we have the following.
	\[
	V_{H \cap N} (\chi|_{H \cap N}) = W_+ \oplus W_-.
	\]
	Therefore any vector $v$ of $V_{H \cap N} (\chi|_{H \cap N})$ is uniquely written as $v = w_+ + w_-$.
	Further by the given hypothesis on $V_H(\chi)$, either $W_+$ or $W_-$ is finite dimensional. Without loss of generality assume $W_-$ is finite dimensional. Argument for $W_+$ is similar. Let $\{ w_1, \ldots, w_r \}$ be a basis of $W_-$. The group $H \cap N$ is normal in $\hirschhigh{H}{x}$ and $\chi|_{H \cap N}$ is invariant under the action of $x$. Therefore $V_{H \cap N} (\chi|_{H \cap N})$ is invariant under  the action of $x$.
	\begin{lemma}
		\label{lem:finite-hg}
		There exists a subspace $Y$ of $V_{H \cap N} (\chi|_{H \cap N})$ such that $Y$ is finite dimensional and $\hirschhigh{H}{x}$-invariant. 
	
	\end{lemma}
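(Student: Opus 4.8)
The plan is to analyse $U := V_{H \cap N}(\chi|_{H \cap N})$ directly as a representation of $\hirschhigh{H}{x}$. On $U$ the normal subgroup $H\cap N$ acts by the scalar character $\chi$, so the action is entirely controlled by the two operators $A := \pi(x)|_U$ and $B := \pi(h)|_U$, and $Y$ will be built out of the finite-dimensional eigenspace $W_-$. Writing $h^2 = h'\in H\cap N$ and choosing $f\in\hf$ with $hxh^{-1}=x^{-1}f$ (which exists precisely because we are in the case $hxh^{-1}\hf = x^{-1}\hf$), the two defining relations of the dihedral quotient translate into the operator identities
\[
B^2 = \chi(h')\,\mathrm{id}, \qquad BAB^{-1} = \chi(f)\,A^{-1}
\]
on $U$, where $\chi(f)$ and $\chi(h')$ make sense since $f,h'\in H$. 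Recall that $W_-$ is the $(-\sqrt{\chi(h')})$-eigenspace of $B$; it is finite dimensional by hypothesis and nonzero, as it coincides with $V_H(\chi)$ for the appropriate choice of the square root (here $\chi(h)^2=\chi(h^2)=\chi(h')$).

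First I would observe that, although $A$ has infinite order, the symmetric Laurent expression
\[
C := A + \chi(f)\,A^{-1}
\]
commutes with $A$ (trivially) and with $B$: using $BA = \chi(f)A^{-1}B$ together with its inverse $BA^{-1} = \chi(f)^{-1}AB$, a one-line computation gives $BC = CB$. Consequently $C$ preserves every eigenspace of $B$; in particular $C(W_-)\subseteq W_-$. This commuting operator is the device that makes the argument work.

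Next I would set $Y := W_- + \pi(x)W_-$, which is finite dimensional with $\dim Y\le 2\dim W_-$ and nonzero. Since $\hirschhigh{H}{x}$ is generated by $H\cap N$, $h$ and $x$, and $H\cap N$ acts by scalars on $U$, it suffices to check $A$- and $B$-stability of $Y$. For $A$-stability the only point is $A^2W_-$: from $CA = A^2 + \chi(f)$ one gets $A^2 v = A(Cv) - \chi(f)v$ for $v\in W_-$, and since $Cv\in W_-$ this lies in $\pi(x)W_- + W_- = Y$. For $B$-stability one has $BW_- = W_-\subseteq Y$, while for $v\in W_-$ the identity $\chi(f)A^{-1} = C - A$ gives $A^{-1}v = \chi(f)^{-1}(Cv - Av)\in Y$, whence $B(Av) = \chi(f)A^{-1}Bv = -\sqrt{\chi(h')}\,\chi(f)A^{-1}v\in Y$. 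Thus $Y$ is a nonzero finite-dimensional $\hirschhigh{H}{x}$-invariant subspace of $U$, as required.

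The main obstacle is exactly the one that $C$ circumvents: because $x$ maps to an infinite-order element of the dihedral quotient, the naive candidates $\sum_k A^kW_-$ are typically infinite dimensional, so one cannot simply saturate $W_-$ under $A$. The key is that the relation $BAB^{-1}=\chi(f)A^{-1}$ forces the particular symmetric combination $C=A+\chi(f)A^{-1}$ to commute with $B$, hence to restrict to the finite-dimensional space $W_-$; the resulting quadratic relation $A^2 = CA - \chi(f)$ then confines the $A$-orbit of $W_-$ to the two-step span $W_- + \pi(x)W_-$. Finally I would record that the reduction "without loss of generality $W_-$ is finite dimensional" is harmless: the argument with $W_+$ in place of $W_-$ is identical, with $-\sqrt{\chi(h')}$ replaced by $+\sqrt{\chi(h')}$ throughout.
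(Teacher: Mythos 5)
Your proof is correct, but it takes a genuinely different route from the paper's. The paper builds $Y$ as the span of $\{w_j,\,(\rho(n_i)w_j)_+,\,(\rho(xn_i)w_j)_+\}$, where the $n_i$ run over coset representatives of $H\cap N$ in $N$, and checks invariance by a direct computation that commutes $x$ past $h$ via $xn_jh = hx^{-1}n_kh_k$ and then separates the $W_+$ and $W_-$ components. You instead strip the situation down to the two operators $A=\pi(x)$ and $B=\pi(h)$ on $U=V_{H\cap N}(\chi|_{H\cap N})$, encode the dihedral-type relation as $BAB^{-1}=\chi(f)A^{-1}$, and extract the $B$-commuting Laurent operator $C=A+\chi(f)A^{-1}$; the resulting quadratic relation $A^2=CA-\chi(f)$ is what confines the $x$-orbit of $W_-$ to $W_-+\pi(x)W_-$. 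This reduction is legitimate: $N\subseteq\langle H,x\rangle$, the group $\langle H,x\rangle$ is generated by $H\cap N$, $h$ and $x$, and $H\cap N$ acts by scalars on $U$, so no separate bookkeeping over the cosets of $H\cap N$ in $N$ is needed. What your approach buys is an explicit $Y$ of dimension at most $2\dim W_-$ and a transparent identification of the algebraic mechanism (the symmetric element $A+\chi(f)A^{-1}$, which must commute with $B$ and hence restrict to the finite-dimensional eigenspace); the paper's construction is a more hands-on verification whose $Y$ is a priori larger. Two small points are worth recording explicitly in your write-up: stability of the finite-dimensional $Y$ under the invertible operators $A$ and $B$ automatically yields stability under $A^{-1}$ and $B^{-1}$, hence under all of $\langle H,x\rangle$; and your remark that the finite-dimensional eigenspace coincides with $V_H(\chi)$ (so is nonzero) is a point the paper leaves implicit but the subsequent argument needs.
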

  \begin{proof} Let $\{n_1=1, n_2, \cdots, n_t \}$ be the set of coset representatives of $H \cap N$ in $N$.
		Consider finite dimensional subspace $Y$ of $V_{H \cap N} (\chi|_{H \cap N})$ generated by $$\{w_j, (\rho(n_i)w_j)_+, (\rho(xn_i)w_j)_+ \}$$ 
for $1 \leq i \leq r$ and $1 \leq j \leq t$. The finite dimensional space $Y$ is clearly $H$-invariant. For action under $x$, write
		\begin{equation}
	\label{eq:interplay-x-h}
		\rho(x) \rho(n_j) w_i = -\frac{1}{\sqrt{\chi(h')}} (\rho(x) \rho(n_j)\rho(h)) w_i,
		\end{equation}
		Let $n_k$ be a coset representative of $H\cap N$ in $N$ and $h_k \in H \cap N$ such that  $xh(h^{-1}n_j h) = hx^{-1}n_k h_k$. Then we obtain the following.
		\begin{equation}
		\label{eq:interplay-x-h2}
			(\rho(x) \rho(n_j)\rho(h)) w_i = \rho(h) \rho(x^{-1})\rho(n_k) \rho(h_k) w_i
		\end{equation}
Now using,
		\begin{equation}
		\rho(x) \rho(n_j )w_i=  (\rho(x) \rho(n_j) w_i)_+ +  (\rho(x) \rho(n_j) w_i)_-,
		\end{equation}
		and (\ref{eq:interplay-x-h}), (\ref{eq:interplay-x-h2}), we obtain the following.
		\begin{equation}
\rho(n_k) \rho(h_k) w_i  = \sqrt{\chi(h')}\left [ \rho(x)\left ( (\rho(x) \rho(n_j) w_i)_+ \right ) - \rho(x) \left ((\rho(x) \rho(n_j) w_i)_- \right )\right ].
		\end{equation}
		We have $(\rho(x) \rho(n_j) w_i)_- \in W_-$ so can be written as linear combination of $w_i$'s and further $\rho(h_k) w_i$ for $h_k \in H \cap N$ is a scalar multiple of $w_i$. Therefore we have, $\rho(x)( (\rho(x) \rho(n_j) w_i)_+ ) \in Y$.
	\end{proof}
	Since $Y$ is a finite dimensional $\hirschhigh{H}{x}$-invariant subspace of $V_{H \cap N} (\chi|_{H \cap N})$. Therefore there exists a vector $v \in Y$ and character $\tilde{\chi}: \hirschhigh{H \cap N}{x} \rightarrow \mathbb C^\times$ such that,
	\begin{equation}
	\label{eq:hng-invariant}
	\rho(z) v = \tilde{\chi}(z) v \,\, \forall \,\, z \in \hirschhigh{H \cap N}{x}.
	\end{equation}
	Consider the subspace $Y_0$ of $Y$ generated by $\{v, \rho(h)v \}$, where $v$ is as defined in (\ref{eq:hng-invariant}). Then $Y_0$ is $\hirschhigh{H}{x}$ invariant subspace with action of $h$ given by the following.
\[
\rho(h)(\rho(h)v) = \chi(h')v.
\]
Therefore it is easy to see that both $(Y_0)_+$ and $(Y_0)_-$ are non-trivial. We denote the above action of $\hirschhigh{H}{x}$ on $Y_0$ by $\rho_0$. Define the space $V_{\hirschhigh{H}{x}}( \rho_0)$ as follows.
	\[
	V_{\hirschhigh{H}{x}}(\rho_0) = \{ v \in V \mid \rho(z) v = \rho_0(z)v\,\, \mathrm{for\,\,all}\,\, z \in \hirschhigh{H}{x} \}
	\]
	\begin{lemma}
		\label{lem:W0-finite-multiplicity}
		The space $V_{\hirschhigh{H}{x}}( \rho_0)$ is finite dimensional.
	\end{lemma}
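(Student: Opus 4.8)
The plan is to show that the $\rho_0$-isotypic subspace $V_{\hirschhigh{H}{x}}(\rho_0)$ is just twice a single $\rho(x)$-eigenspace, and then to embed that eigenspace into the finite-dimensional space $W_-$. First I would record that $\rho_0\cong\ind_{\hirschhigh{H\cap N}{x}}^{\hirschhigh{H}{x}}(\tilde{\chi})$, with $v$ and $\rho(h)v$ as the two coset vectors and $\tilde{\chi}(x)=\alpha$. Any $\hirschhigh{H}{x}$-equivariant map from $\rho_0$ to $V$ is determined by the image $w$ of $v$, which must satisfy $\rho(z)w=\tilde{\chi}(z)w$ for all $z\in\hirschhigh{H\cap N}{x}$; since $\tilde{\chi}|_{H\cap N}=\chi|_{H\cap N}$, this says exactly that $w\in E_\alpha:=\{\,u\in V_{H\cap N}(\chi|_{H\cap N})\mid\rho(x)u=\alpha u\,\}$. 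Conversely, each nonzero $w\in E_\alpha$ spans such a map with two-dimensional image $\mathbb{C}w+\mathbb{C}\rho(h)w$. Hence $V_{\hirschhigh{H}{x}}(\rho_0)=E_\alpha\oplus\rho(h)E_\alpha$ and $\dim V_{\hirschhigh{H}{x}}(\rho_0)=2\dim E_\alpha$, so it suffices to prove $\dim E_\alpha<\infty$.

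Next I would bring in the infinite-dihedral relation on $V_{H\cap N}(\chi|_{H\cap N})=W_+\oplus W_-$. Write $A=\rho(x)$ and $B=\rho(h)$, and let $c\in(H\cap F)\subseteq(H\cap N)$ be defined by $hxh^{-1}=x^{-1}c$. On this space $B^2=\chi(h')\,\mathrm{id}$ (as $h^2=h'$) and $\rho(c)=\chi(c)\,\mathrm{id}$ (as $c\in H\cap N$), whence $BA=\chi(c)A^{-1}B$, i.e. $A(BA)=\chi(c)B$. Applying this to $u\in E_\alpha$ gives $A(Bu)=\chi(c)\alpha^{-1}(Bu)$, so that $B$ carries $E_\alpha$ isomorphically onto the eigenspace $E_{\chi(c)\alpha^{-1}}$.

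The decisive step is the injection $E_\alpha\hookrightarrow W_-$. Let $P_-(u)=\frac{1}{2}\bigl(u-\frac{1}{\sqrt{\chi(h')}}Bu\bigr)$ be the projection of $V_{H\cap N}(\chi|_{H\cap N})$ onto $W_-$ along $W_+$; its kernel on $E_\alpha$ is $E_\alpha\cap W_+$. I claim this intersection is zero. Indeed, if $u\in E_\alpha\cap W_+$ then $Bu=\sqrt{\chi(h')}\,u$ lies in $E_\alpha$, while by the previous paragraph $Bu\in E_{\chi(c)\alpha^{-1}}$; since $\rho_0$ is irreducible we have $\alpha^2\neq\chi(c)$, so $\alpha\neq\chi(c)\alpha^{-1}$ and these two eigenspaces meet only in $0$, forcing $Bu=0$ and hence $u=0$. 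Thus $P_-$ embeds $E_\alpha$ into $W_-$, giving $\dim E_\alpha\le\dim W_-<\infty$, and therefore $\dim V_{\hirschhigh{H}{x}}(\rho_0)=2\dim E_\alpha<\infty$.

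The main obstacle is the irreducibility input used in the last step, namely the equivalence ``$\rho_0$ irreducible $\iff\alpha^2\neq\chi(c)$'', since it is exactly the condition that prevents $E_\alpha$ from carrying an a priori infinite-dimensional piece of $W_+$. To establish it I would compute the conjugate character $\tilde{\chi}^{h}$: writing $h^{-1}xh=x^{-1}c'$ yields $\tilde{\chi}^{h}(x)=\alpha^{-1}\chi(c')$, while $\tilde{\chi}^{h}=\tilde{\chi}$ on $H\cap N$ because $\chi$ is trivial on commutators. Hence $\rho_0=\ind(\tilde{\chi})$ is reducible precisely when $\alpha^2=\chi(c')$, and one checks $\chi(c')=\chi(c)$ using the same commutator-triviality together with the $x$-invariance of $\chi$ on $H\cap N$. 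Should instead $\alpha^2=\chi(c)$ hold, then $\tilde{\chi}$ extends to a character of $\hirschhigh{H}{x}$ and we are back in the previously treated situation of extending $\chi$ to the larger group; so in the present, genuinely two-dimensional case that value is excluded and the embedding $E_\alpha\hookrightarrow W_-$ is valid.
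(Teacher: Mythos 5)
Your argument is a genuinely different route from the paper's: the paper proves the lemma by contradiction, building a strictly increasing chain of finite-dimensional $\langle H,x\rangle$-invariant subspaces $U_i$ whose intersections with $W_-$ grow, whereas you give a direct structural argument, identifying the multiplicity space with the eigenspace $E_\alpha=\{u\in V_{H\cap N}(\chi|_{H\cap N})\mid \rho(x)u=\alpha u\}$ and embedding it into $W_-$ via the projection $P_-$. In the case where $\ind_{\langle H\cap N,x\rangle}^{\langle H,x\rangle}(\tilde{\chi})$ is irreducible, i.e.\ $\alpha^2\neq\chi(c)$, your computation $BA=\chi(c)A^{-1}B$, the resulting fact that $\rho(h)$ carries $E_\alpha$ into $E_{\chi(c)\alpha^{-1}}$, and the conclusion $E_\alpha\cap W_+=0$ are all correct, and this is cleaner and more informative than the paper's chain argument (it in particular gives directly that $V_{\langle H\cap N,x\rangle}(\tilde{\chi})$ is finite dimensional, which is what the construction actually uses downstream).

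The gap is in the excluded case $\alpha^2=\chi(c)$. You dismiss it by saying that in the ``genuinely two-dimensional case that value is excluded,'' but $\dim Y_0=2$ (that is, $\rho(h)v\notin\mathbb{C}v$) does not imply that $\ind(\tilde{\chi})$ is irreducible: the induced representation can be two dimensional and split as $\chi'_+\oplus\chi'_-$, the two extensions of $\tilde{\chi}$ to $\langle H,x\rangle$. The lemma as stated must also cover this case and the one-dimensional case, and there your embedding genuinely fails: when $\alpha=\chi(c)\alpha^{-1}$ the two eigenspaces coincide, so nothing forces $E_\alpha\cap W_+=0$. Moreover the proposed fallback (``extend $\tilde{\chi}$ and reduce to the earlier situation'') does not close the gap, because one of the two extensions satisfies $\chi'(h)=+\sqrt{\chi(h')}$, so it restricts to $H$ as the character \emph{other} than $\chi$; its weight space is $E_\alpha\cap W_+$, which lives inside $W_+$, and the finite-weight hypothesis only controls $W_-$. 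So either you must show that the construction can always arrange $\alpha^2\neq\chi(c)$ (which is not done), or you need a separate argument bounding $E_\alpha\cap W_+$; as written the proof covers only the irreducible two-dimensional case.
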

	\begin{proof} By definition of $\rho_0$, we have $V_{\hirschhigh{H}{x}}( \rho_0) \subseteq V_{H \cap N} (\chi|_{H \cap N})$. Therefore, we can decompose $V_{\hirschhigh{H}{x}}( \rho_0)$ as follows.
		\[
		V_{\hirschhigh{H}{x}}( \rho_0) =  V_{\hirschhigh{H}{x}}( \rho_0)_+ \oplus V_{\hirschhigh{H}{x}}( \rho_0)_-.
		\]
		We have $V_{\hirschhigh{H}{x}}( \rho_0)_- \subseteq W_-$ and therefore is finite dimensional. To prove the claim, we need to prove that the space $V_{\hirschhigh{H}{x}}( \rho_0)_+$ is finite dimensional. We prove this by contradiction. In particular we prove that if $V_{\hirschhigh{H}{x}}( \rho_0)_+$ is infinite dimensional then there exists vector spaces $U_i$ satisfying the following.
		\begin{enumerate}
			\item Space $U_i$ are finite dimensional $\hirschhigh{H}{x}$-invariant.
			\item $U_i \subsetneq U_{i+1} \subsetneq V_{\hirschhigh{H}{x}}( \rho_0)$ for all $i$.
			\item $\emptyset \neq U_i \cap W_- \subsetneq U_{i+1} \cap W_-$.
		\end{enumerate}
		But then this contradicts the finite dimensionality of $W_-$.
		The spaces $U_i$ are constructed inductively. Suppose $U_i$ is already constructed then we construct $U_{i+1}$ as follows. Choose vector $u_{i+1}$ such that $u_{i+1}  \in   V_{\hirschhigh{H}{x}}( \rho_0)_+ \setminus( U_{i})_+$ and take $U_{i + 1}$ to be space generated by $U_i$ and set $S = \{u_{i+1}, (\rho(h)u_{i+1}) \}$. As mentioned earlier, the space generated by $S$ is $\hirschhigh{H}{x}$ invariant and therefore (1) and (2) follows. For (3) we see that the space generated by $S$ is $\hirschhigh{H}{x}$-invariant and as above intersects $W_-$ non-trivially. This completes the proof.
	\end{proof}
	In case $\rho_0$ is one dimensional, then we take $H' = \hirschhigh{H}{x}$ and $\chi' = \rho_0$. In case $\rho_0$ is two dimensional, we have that $\rho_0 \cong \ind_{\hirschhigh{H \cap N}{x}}^{\hirschhigh{H}{x}}(\tilde{\chi})$. Now since   $V_{\hirschhigh{H \cap N}{x}}(\tilde{\chi}) \subseteq V_{\hirschhigh{H}{x}(\rho_0)}$. By Lemma~\ref{lem:W0-finite-multiplicity}, latter is finite dimensional and therefore $V_{\hirschhigh{H \cap N}{x}}(\tilde{\chi})$ is finite dimensional.
%	
%	
%	
%	existence of a subgroup $H'$ and its character $\chi': H' \rightarrow \mathbb C^{\times}$ such that $\hirschhigh{(H \cap N)}{x} \subseteq H' \subseteq \hirschhigh{H}{x}$ and $V_{H'}(\chi')$ is finite dimensional. For this consider the space $W_0$ as $\hirschhigh{(H \cap N)}{g}$-invariant space. Then $W_0 \subseteq V_{H \cap N} (\chi|_{H \cap N}) $ is finite dimensional and so by Lemma~\ref{lem:finite-dimensional-monomial}, the space $W_0$ is monomial. Therefore there exists $H'$ and $\delta'$ such that $W_0 = \ind_{H'}^{\hirschhigh{H}{g}}(\delta')$. We can further assume $\hirschhigh{(H \cap N)}{g} \subseteq H' \subseteq \hirschhigh{H}{g}$ because restriction of $W_0$ on $\hirschhigh{H\cap N}{g}$ is sum of one-dimensional representations. The space $W_0$ is irreducible and is induced irreducibly from $\delta'$, therefore
	In any case we see that for obtained tuple $(H', \chi'),$ we have $V_{H'}(\chi') \neq 0$ is finite dimensional and Hirsch length of $H'$ is strictly greater than that of $H$.
	From the proof actually a stronger result follows.
	\begin{thm}
		\label{thm: infinite-g} Suppose $V$ is an irreducible representation of a discrete supersolvable group $G$ having finite weight.
		Let $(H, \chi)$ be such that $H$ has maximum
		Hirsch length possible among the pairs $(H', \chi')$ with $V_{H'}(\chi')$ is non-trivial and finite dimensional. Then for any $g \in G $ such that $\chi^g = \chi$ on $H \cap F$, we have $g^i \in H$ for some $i$.
	\end{thm}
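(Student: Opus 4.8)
The plan is to argue by contradiction, re-using the construction already carried out above in Step~I (Cases~1 and~2). Write $F$ for the Fitting subgroup, put $A = H \cap F$ and $R = \sqrt[F]{A}$, and recall that $[H:A] < \infty$ and $\h(H) = \h(A)$ by Lemma~\ref{fitting-subgroup}. Note first that a maximal pair exists at all, since the Hirsch lengths of subgroups of $G$ are bounded by $\h(G)$. Suppose then, contrary to the assertion, that there is some $g \in G$ with $\chi^g = \chi$ on $A$ while $g^i \notin H$ for every $i$. The goal is to manufacture from $g$ a pair $(H', \chi')$ with $V_{H'}(\chi')$ non-zero and finite dimensional and $\h(H') > \h(H)$; since $\h(H)$ is maximal among all such pairs, this contradiction forces $g^i \in H$ for some $i$, which is what is wanted.

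First I would verify that $g$ automatically normalizes the radical $R$, so that $g$ meets the standing hypotheses of Step~I. For $\chi^g = \chi$ to be meaningful on $A$ one needs $A$ to lie in the domain $H^g$ of $\chi^g$, i.e. $gAg^{-1} \subseteq H$; since $F \trianglelefteq G$ gives $gAg^{-1} \subseteq F$, this yields $gAg^{-1} \subseteq A$, equivalently $A^{g^{-1}} \subseteq A$. Conjugation is an isomorphism, so $\h(A^{g^{-1}}) = \h(A)$, and a subgroup of equal Hirsch length has finite index in a finitely generated nilpotent group; hence $[A : A^{g^{-1}}] < \infty$. Finite-index subgroups have the same radical in $F$, so $\sqrt[F]{A^{g^{-1}}} = \sqrt[F]{A} = R$. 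On the other hand conjugation by $g^{-1}$ is an automorphism of $F$ and commutes with the formation of radicals, whence $\sqrt[F]{A^{g^{-1}}} = R^{g^{-1}}$. Therefore $R^{g^{-1}} = R$, that is $g \in N_G(\sqrt[F]{H \cap F})$; moreover $gAg^{-1} \subseteq H$ gives $A \subseteq H^g$, so $A \subseteq H \cap H^g$ and the invariance $\chi^g = \chi$ holds on $A$ viewed as a subgroup of $H \cap H^g$.

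With $g \in N_G(\sqrt[F]{H\cap F})$, $\chi^g = \chi$ on $H \cap F$, and $g^i \notin H$ for all $i$, the element $g$ satisfies precisely the opening assumptions of Step~I. Replacing $g$ by a power $g^k$ with $(H \cap F)^{g^k} = H \cap F$ — available because $R$ is finitely generated and has only finitely many subgroups of index $[R:A]$ by Lemma~\ref{finite-finite.index} — keeps all three properties in force, the character invariance propagating from $g$ to $g^k$ because $gAg^{-1} \subseteq A$ and $\chi^g = \chi$ on $A$, and no power of $g$ meeting $H$. The construction of Step~I (through Cases~1 and~2, which use only the invariance of $\chi$ on $H \cap F$, upgraded to $H$ or to $H \cap N$ via the finite dimensionality of $\ind_{H \cap F}^{H}(\chi|_{H\cap F})$) then produces a subgroup $H'$ and a character $\chi'$ with $V_{H'}(\chi') \neq 0$ finite dimensional and $\h(H') > \h(H)$, contradicting maximality. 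I expect the only genuinely new step — the one not already performed above — to be the radical-normalization argument of the second paragraph; once it is in place the theorem is just a repackaging of the existence construction, which is exactly why it follows directly from that proof.
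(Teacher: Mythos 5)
Your proof is correct and takes essentially the same route as the paper, which obtains this theorem as an immediate byproduct of the Step-I construction (``from the proof actually a stronger result follows''), i.e.\ precisely the contradiction-with-maximality argument you run. Your second paragraph, verifying that such a $g$ automatically normalizes $\sqrt[F]{H\cap F}$ so that the hypotheses of Step~I are met, correctly supplies a detail the paper leaves implicit.
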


	Thus now onwards we assume that $(H, \chi)$ are chosen so that pair $(H, \chi)$ is
	maximal with the property that Hirsch length of $H$ is maximum and $V_H(\chi) \neq 0$ is finite dimensional. If for this maximal $(H, \chi)$, there exists $g \in N_G(\sqrt[F]{H \cap F}) \setminus H$ such that $\chi^g = \chi$ on $H \cap H^g$, we will modify our pair $(H, \chi)$ further as given in next step.

	\subsection{STEP-II}
	For the next step, let $H_0 = \cap_{N_G (\sqrt[F]{H \cap F})} gHg^{-1}$ and $\chi_0 = \chi|_{H_0} $. By Lemma~\ref{lem: baumslag} and Lemma~\ref{finite-finite.index}, we have $H_0 \cap F$ is a finite index subgroup $H \cap F$ and therefore also has finite index in $H$. Let,
	
	\[ L = \{ g \in G \mid \chi^g = \chi \,\, \mathrm{on} \,\, H^g \cap H \}.
	\]
	By Theorem~\ref{thm: reduction-to-normalizer} and Lemma~\ref{lem: baumslag}, we have $L \subseteq N_G(\sqrt[F]{H \cap F})$. By definition, the group $H_0$ is normal in $N_G(\sqrt[F]{H \cap F})$. Define,
	\[
	L_0 = \{ g \in N_G(\sqrt[F]{H \cap F}) \mid \chi_0^g= \chi_0 \}.
	\]
	
	Then  $ H_0 \subseteq H \subseteq L \subseteq L_0$ and $H_0$ is a normal subgroup of $L_0$. If $H_0 = L_0$, then we are done.
	We claim that $L_0/H_0$ is a finite group. For if there exists $g \in L_0$ such that $\chi_0^g = \chi_0$ and $g^i \notin H_0$ for any $i$. Then again as in the Case 1, we can replace $g$ with some high power to obtain that $\chi^g = \chi$ on $H \cap F$. But then $g^i \notin H$ for any $i$ contradicts maximality of $(H, \chi)$ by Theorem~\ref{thm: infinite-g}. This implies that $L_0/H_0$ is super-solvable  group finitely generated by finite order elements which does not contain an element of infinite order. Therefore $L_0/H_0$ is finite by Lemma~\ref{lem: infinite-cyclic}. Let $H_1/H_0$ be a cyclic normal subgroup of $L_0/H_0$ and let $xH_0$ be a generator of $H_1/H_0$. Let $S
	= (x)$ be the cyclic subgroup generated by $x$. Then $x$ normalizes $H$, therefore there exists a character $\delta: S
	\rightarrow \mathbb C^{\times}$ such that
	\[
	V_{H_0}(\chi_0) \cap V_S(\delta) \neq 0.
	\] Then by
	Lemma~\ref{diamond-lemma} for these $H_0$, $\chi_0$, $S$, $\delta$, we get that there exists a character
	$\chi_1$ of $H_1 = H_0S$ that extends $\chi$ and $V_{H_1}(\chi_1) \neq 0$.
	
	Let $L_1 = \{ g \in L_0 \mid (\chi_1)^g = \chi_1\}$. Now $L_1/H_1$ is also a finite subgroup of order strictly smaller than that of $L_0/H_0$ and therefore
	continuing this way, there exists a subgroup $H_k$
	and a character $\chi_k$ of $H_k$ such that $H_k = L_k$, $(\chi_k)|_{H_0} = \chi_0$, $V_{H_k}(\chi_k) \neq 0$,
	and $(\chi_k)^g \neq \chi_k$ on $H_k^g \cap H_k$ for any $g \in N_G(\sqrt[F]{H_k \cap F}) \setminus H_k$.
	The last assertion follows by the fact that $\sqrt[F]{H \cap F} = \sqrt[F]{H_k \cap F}$.
	We denote this obtained pair $(H_k, \chi_k)$ by $(H', \chi')$ and this pair is such that $\ind_{H'}^G(\chi')$ is Schur irreducible.

\section{Irreducibility of induced representation}
\label{sec: irreducible}
In this section we prove that for a supersolvable group $G$, if induced representation $\ind_H^G(\chi)$ is Schur irreducible then it is infact an irreducible representation.
The proofs of this section are modelled on ~\cite[Section~3]{Beloshapka-Gorchinskiy}.

%\begin{definition}(Schur Irreducible) A representation $V$ of $G$ is called Schur irreducible if the space of intertwining operators of $V$ is one dimensional, i.e., $\End_G(V) \cong \mathbb C$.
%\end{definition}
%\begin{definition} (Proximate Normal Subgroup) A subgroup $H$ of $G$ is called proximate normal if $H \cap H^g$, for all $g \in G$ are finite index normal subgroups of $H$.
%\end{definition}

First of all we prove the following general result which is true for all discrete groups.
\begin{thm}
	\label{thm:proximate-schur}
Let $H$ be a proximate normal subgroup of a discrete group $G$ and let $(\rho, W)$ be an irreducible representation of $H$ such that $\tilde{\rho} = \ind_H^G(\rho)$ is Schur irreducible. Then the induced representation $\ind_H^G(\rho)$ is irreducible.
\end{thm}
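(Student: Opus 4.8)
The plan is to use the Schur irreducibility hypothesis together with the proximate normality of $H$ to produce enough endomorphisms of any nonzero subrepresentation to force it to be everything. The key structural input is Mackey's formula (\ref{eq:mackey-formula}) from Remark~\ref{rk:general-intertwiner}, which tells us that upon restriction to $H$ the induced representation decomposes as a direct sum $\ind_H^G(\rho)|_H \cong \oplus_{g \in H\backslash G/H} \ind_{H\cap H^g}^H(\rho^g|_{H\cap H^g})$ indexed by double cosets. The point of proximate normality is that each $H \cap H^g$ is a \emph{finite index normal} subgroup of $H$ with abelian quotient, so each summand $\ind_{H\cap H^g}^H(\rho^g|_{H\cap H^g})$ is a \emph{finite} induction of a representation from a finite-index normal subgroup, hence a finite-length, genuinely manageable $H$-representation. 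In particular the identity double coset contributes $\rho$ itself.

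The approach I would take is the following. First I would show that any nonzero $G$-subrepresentation $U \subseteq \ind_H^G(\rho)$, when restricted to $H$, must contain a copy of $\rho$. To see this, note $U|_H$ is a nonzero subrepresentation of $\oplus_g \ind_{H\cap H^g}^H(\rho^g|_{H\cap H^g})$; because $H\cap H^g$ is normal of finite index in $H$ with $H/(H\cap H^g)$ abelian, Clifford theory applies to each summand, and every irreducible $H$-constituent of the whole restriction is obtained by inducing a character-twist of $\rho$ from $H\cap H^g$, and these constituents all restrict to $H\cap H^g$ as sums of conjugates of $\rho|_{H\cap H^g}$. The next step, the technical heart, is to argue that the $H$-isotypic structure forces $U|_H$ to meet the identity-double-coset summand, i.e.\ to contain the subrepresentation $\rho$ sitting in degree $e$. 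Here I would use that $U$ is $G$-stable: applying $\pi(g)$ moves the $g$-summand onto the identity summand, so the $G$-action shuffles the Mackey components transitively in a way that, combined with nonvanishing of $U$, guarantees $U$ contains a vector supported at the identity coset transforming under $H$ by $\rho$.

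Once $U|_H$ contains a copy of the canonical $\rho$ sitting at the identity coset of $\ind_H^G(\rho)$, I would invoke Frobenius-type adjunction / the construction in Proposition~\ref{prop: reduction} to produce a nonzero $G$-linear map $\ind_H^G(\rho) \to U \hookrightarrow \ind_H^G(\rho)$. Explicitly, a copy of $\rho$ inside $U|_H$ yields by the universal property of induction a nonzero $G$-endomorphism $\varphi \in \End_G(\ind_H^G(\rho))$ whose image lies in $U$. Since $\ind_H^G(\rho)$ is Schur irreducible by hypothesis, $\End_G(\ind_H^G(\rho)) \cong \mathbb{C}$, so $\varphi$ is a nonzero scalar multiple of the identity; its image is therefore all of $\ind_H^G(\rho)$, forcing $U = \ind_H^G(\rho)$. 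As $U$ was an arbitrary nonzero subrepresentation, $\ind_H^G(\rho)$ is irreducible.

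The main obstacle I expect is the middle step: rigorously extracting, from a nonzero $G$-stable subspace $U$, a copy of $\rho$ living at the identity double coset, rather than merely some twisted constituent supported off the identity. The subtlety is that a priori $U$ could contain only vectors supported on nontrivial cosets; one must use both the $G$-translation action (to move support to the identity) and the normality plus abelian-quotient condition (to control, via Clifford theory, precisely which irreducible $H$-types appear and to ensure the identity component is not annihilated). Making the adjunction in Proposition~\ref{prop: reduction} land \emph{inside} $U$, so that Schur irreducibility can be leveraged, is where the proximate normal hypothesis must be used most carefully, and it is the part I would write out in full detail.
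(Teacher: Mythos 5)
Your overall scaffolding --- Mackey's decomposition (\ref{eq:mackey-formula}), the observation that the identity double coset contributes the canonical copy of $W$, and the endgame of turning an $H$-embedding $\rho\hookrightarrow U|_H$ into a $G$-endomorphism with image in $U$ and then invoking Schur irreducibility --- is sound and agrees with the paper's point of view. But the step you yourself flag as ``the technical heart'' is essentially the whole content of the theorem, and the heuristic you offer for it does not work. A nonzero $G$-stable $U$ certainly contains vectors whose support meets the identity right coset (translate on the right), but such a vector is in general a sum $w_{11}+\tilde{\rho}(g_2)w_{21}+\cdots$ with components spread over several double cosets, and right translation by elements of $G$ merely permutes the support; it does not isolate any single component. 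Worse, the irreducible $H$-constituents of the off-identity summands $\mathrm{Ind}_{H\cap H^{g}}^{H}(\rho^{g}|_{H\cap H^g})$ need not be isomorphic to $\rho$ --- indeed Schur irreducibility forces $\mathrm{Hom}_H\bigl(\rho,\mathrm{Ind}_{H\cap H^{g}}^{H}(\rho^{g})\bigr)=0$ for $g\notin H$ by Remark~\ref{rk:general-intertwiner} --- so a socle/Clifford-theory argument applied to $U|_H$ could a priori produce only constituents to which your adjunction does not apply, yielding the zero map. Thus ``the $G$-action shuffles the Mackey components transitively'' is not an argument, and the copy of $\rho$ at the identity coset is never actually produced.

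The paper closes exactly this gap by an induction on the number $\sum_i\ell_i(v)$ of Mackey components of a nonzero vector $v$. After normalizing so that $v=w_{11}+\tilde{\rho}(g_2)w_{21}+\cdots$ with $w_{11}\in W$, it compares the annihilator $I_0\subseteq\mathbb C[H\cap H^{g_2}]$ of $w_{11}$ with the annihilator $J$ of $\tilde{\rho}(g_2)w_{21}$. If these coincided, one would obtain a nonzero $(H\cap H^{g_2})$-map $\rho|_{H\cap H^{g_2}}\to\rho^{g_2}|_{H\cap H^{g_2}}$, hence by Frobenius reciprocity (\ref{frobenius-reciprocity}) for the finite-index subgroup $H\cap H^{g_2}$ a nonzero off-identity summand of $\mathrm{End}_G(\mathrm{Ind}_H^G(\rho))$, contradicting Schur irreducibility. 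An element $x$ of the symmetric difference of $I_0$ and $J$ then annihilates one component but not the other, strictly decreasing $\sum_i\ell_i$, and the induction terminates at a vector lying in $W$, which visibly generates the whole induced module. If you want to complete your write-up, this annihilator-ideal separation (or an equivalent device) is the missing ingredient; note that it is here, through the normality and the finiteness of the index of $H\cap H^{g}$ in $H$, that proximate normality is actually used.
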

\begin{proof}
To prove that $\ind_H^G(\rho)$ is irreducible, we show that it is generated by any non-zero $v \in \ind_H^G(\rho)$ as $G$-space.

Let $H$ be a subgroup of $G$. Then by Mackey's formula for compact induction (see~(\ref{eq:mackey-formula})), we have the following.
\begin{equation}
\label{eq:induction-reduction}
\ind_H^G(\rho)|_H \cong \oplus_{g \in H \backslash  G /H  } \ind_{H \cap H^g}^H(\rho^g|_{H \cap H^g}).
\end{equation}
By the given hypothesis, $H \cap H^g$ is normal subgroup of $H$ and therefore we have the following.
\begin{equation}
\label{eq:reduction-to-H-subgroup}
\ind_{H \cap H^g}^H(\rho^g)|_{H \cap H^g} \cong \oplus_{h \in H /(H \cap H^g)} \rho^{hg}.
\end{equation}
Thus by (\ref{eq:induction-reduction}) and (\ref{eq:reduction-to-H-subgroup}), every vector $v \in \ind_H^G(\rho)$ can be written as the following.
\begin{equation}
\label{eq:v-form}
v  =   v_1 + v_2 + \ldots + v_{k(v)},
\end{equation}
 such that $v_i \in  \ind_{H \cap H^{g_i}}^H(\rho^{g_i})$ are non-zero. Therefore
 \begin{equation}
 \label{eq:vi-form}
 v_i = \sum_{j = 1}^{l_i(v)} \tilde{\rho}(h_j g_i) w_j,
 \end{equation}
 for $h_j \in H/(H \cap H^{g_i})$ and non-zero $w_j \in W$. We note that if $v = w$ for some $w \in W$, then $\tilde{\rho}(g)v$ for all $g \in G$ is contained in the $G$-space generated by $v$. Therefore from (\ref{eq:v-form}) and (\ref{eq:vi-form}),  it is clear that any $v$ such that $v = w$ generates $\ind_H^G(\rho)$.

 We use induction on $\sum_{i =1}^{k(v)} \ell_i(v)$ to prove the result. For the case when $\sum_{i =1}^{k(v)} \ell_i(v)  = 1$, then $k(v) = \ell_i(v) = 1$, therefore $v = \tilde{\rho}(hg)w$ for some $g \in H \backslash G /H$, $h \in H/(H \cap H^g)$ and non-zero $w \in W$. By multiplying $v$ with $\tilde{\rho}(g^{-1}h^{-1})$ we obtain that $w \in W$ is in the $G$-space generated by $v$. Then as mentioned earlier $v$ generates the whole space $\ind_H^G(\rho)$.

 For the case when $\sum_{i =1}^{k(v)} \ell_i(v)  > 1$, by multiplying with suitable $\tilde{\rho}(g)$ for $g \in G$, we can assume that $v_1$ is such that $\tilde{\rho}(g_1)$ is identity and therefore $v_1 \in W$. Thus we have the following.
 \[
 v = \sum_{i=1}^{k(v)} \sum_{j = 1}^{\ell_i(v)} \tilde{\rho}(h_{ij}g_i)w_{ij},
 \]
 with $g_1$ and $h_{11}$ equal to the identity and $\ell_1(v) = 1$. By multiplying with suitable element of $h$, we can further assume that $h_{21}$ is identity. Note that this preserves the sum $\sum_{i =1}^{k(v)} \ell_i(v) $. Thus $v$ is of the following form form.
 \[
 v = w_{11} + \tilde{\rho}(g_2) w_{21} + \tilde{\rho}(h_{22} g_2) w_{22} + \cdots + \tilde{\rho}(h_{k(v)\ell_{k(v)}(v)} g_{k(v)})w_{k(v)\ell_{k(v)}(v)}.
 \]

 Let $I$ be the kernel of $w_{11}$ under the $\mathbb C[H]$ module action on $W$. The action of $H$ on $W$ is irreducible. Therefore,
  \[\mathbb C[H]/I \cong \mathbb C[H]w_{11} = W.
  \]
  Let $ I_0 = I \cap \mathbb C[H \cap H^{g_2}]$. Recall, the algebra $\mathbb C[H]$ consists of linear space generated by finite support functionals on $H$. We identify $\mathbb C[H \cap H^{g_2} ]$ with the subspace of $\mathbb C[H]$ consisting of linear space generated by functionals with their support zero outside $H \cap H^{g_2}$. Then, it is easy to see that there exists a well defined non-trivial $\mathbb C[H \cap H^{g_2}]$-module map from $\mathbb C[H]/I$ to $\mathbb C[H \cap H^{g_2}]/I_0$ by projecting functions with support outside $H \cap H^{g_2}$ to zero. This is depicted as below.
 \[
 \xymatrix{
 \mathbb C[H] \ar@{->>}[r] \ar@{->>}[dr] & \mathbb C[H \cap H^{g_2}] \ar@{->>}[r] & \frac{\mathbb C[H \cap H^{g_2}]}{I_0} \\
 & \frac{\mathbb C[H]}{I} \ar@{->>}[ur]^f &
  	}
 \]

  Let $J$ be the kernel of $w'= \tilde{\rho}(g_2) w_{21} \in \ind_{H \cap H^{g_2}}^H(\rho^{g_2}/(H \cap H^{g_2}))$ under the $\mathbb C[H \cap H^{g_2}]$-module action. Although $\ind_{H \cap H^{g_2}}^H(\rho^{g_2}/(H \cap H^{g_2}))$ is not necessarily an irreducible $\mathbb C[H \cap H^{g_2}]$-module, still the following holds.
  Thus
  \begin{equation}
  \label{eq: wg2-module}
  \mathbb C[H \cap H^{g_2}] w' \cong \mathbb C[H \cap H^{g_2}]/J.
  \end{equation}
    We claim that $I_0 \neq J$. As otherwise there exists a non-trivial $H \cap H^{g_2}$-linear map between $W$ and $W^{g_2}$ given by the following.
\[
\xymatrix{
W \ar@{->>}[r] & \mathbb C[H \cap H^{g_2}]/I_0 \ar[r]^\cong  &  C[H \cap H^{g_2}]/J  \ar@{^(->}[r] & W^{g_2} }.
\]
In the above diagram, last inclusion is true because $\mathbb C[H \cap H^{g_2}] w' \subseteq W^{g_2}$ and (\ref{eq: wg2-module}). This implies $\mathrm{Hom}_{H \cap H^{g_2}}(\rho|_{H \cap H^{g_2}}, \rho^{g_2}|_{H \cap H^{g_2}}) \neq 0$. Then by the fact that $H \cap H^{g_2}$  has finite index in $H$ and (\ref{frobenius-reciprocity}), we have that the space
$\mathrm{Hom}_H(\rho, \ind_{H \cap H^{g_2}}^H(\rho^{g_2}))$ is non-zero. This is a contradiction to the fact that $\ind_H^G(\rho)$ is Schur irreducible (see Remark~\ref{rk:general-intertwiner}).

Therefore there exists $x \in \mathbb C[H]$ such that $x \in (I_0 \setminus J) \cup (J \setminus I_0)$. We get for any $h \in H$ and $g \in G$, $x h g = h_1 \tilde{g} h_2$ for some $h_2 \in H$, $\tilde{g} \in H \backslash G / H$ and $h_1 \in H/(H \cap H^{\tilde{g}})$. At this moment induction will apply because $\sum_{i =1}^{k(xv)} \ell_i(xv) < \sum_{i =1}^{k(v)} \ell_i(v) $.
 \end{proof}

The following generalizes the notion of a subnormal group.
\begin{definition} (Subproximate normal series) A chain of subgroups of $H$ given by the following,
	\begin{equation}
	\label{eq:subproximate-series}
	G = H_k \supsetneq H_{k-1} \supsetneq \cdots \supsetneq H_0 = H,
	\end{equation}
	is called Subproximate normal series of length $k$, if $H_i$ is proximate normal in $H_{i+1}$ for all $0 \leq i \leq k-1$.
\end{definition}
\begin{definition} (Subproximate normal subgroup)
\label{defn:subproximate-normal} A subgroup $H$ of a group $G$ is called subproximate normal subgroup if it has a finite subproximate normal series.
\end{definition} 
It is well known that every subgroup of a finitely generated nilpotent group is subnormal. The following lemma shows that the notion of subproximate normal is an analogous notion for supersolvable groups.
\begin{lemma}
\label{proximate-series}	
Every subgroup of a supersolvable group is subproximate normal subgroup.
\end{lemma}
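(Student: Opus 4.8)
The plan is to build the required series directly from the defining invariant series \eqref{eq:invarint-series} of $G$. Given normal subgroups $1 = N_{k+1} \subseteq N_k \subseteq \cdots \subseteq N_0 = G$ with each quotient $N_i/N_{i+1}$ cyclic, and a subgroup $H \le G$, I would set $H_i = H N_i$; this is a genuine subgroup since each $N_i$ is normal in $G$. Because $N_i \supseteq N_{i+1}$ we obtain a descending chain $G = H_0 \supseteq H_1 \supseteq \cdots \supseteq H_{k+1} = H$, and deleting repeated terms makes the inclusions strict. It then suffices to show that each step $H_{i+1} = HN_{i+1}$ is proximate normal in $H_i = HN_i$.

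The heart of the argument is the following special case, which I would isolate as a lemma: \emph{if $N \triangleleft G$ is cyclic and $H \le G$, then $H$ is proximate normal in $HN$.} First observe that $M := H \cap N$ is normal in $HN$, since for $x \in M$ and $hn \in HN$ one has $(hn)x(hn)^{-1} = hxh^{-1}$ (because $N$ is abelian), and this lies in $H \cap N = M$. Passing to $\overline{HN} = HN/M$ exhibits a semidirect product $\overline{HN} = \overline N \rtimes \overline H$ with $\overline N$ cyclic and normal and $\overline H \cap \overline N = 1$. For $g = hn \in HN$ one computes $H^g = H^n$, and a short calculation in the semidirect product identifies $\overline H \cap \overline H^{\,\overline n}$ with the stabilizer $\mathrm{Stab}_{\overline H}(\overline n)$ of $\overline n$ under the conjugation action of $\overline H$ on $\overline N$.

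The key point is then that $\mathrm{Aut}(\overline N)$ is \emph{finite and abelian}, for any cyclic group: it is $(\mathbb Z/m)^{\times}$ in the finite case and $\{\pm 1\}$ in the infinite cyclic case. Hence the kernel $K$ of the action $\overline H \to \mathrm{Aut}(\overline N)$ has finite index and contains $[\overline H, \overline H]$; since $K \subseteq \mathrm{Stab}_{\overline H}(\overline n)$, the stabilizer is a finite-index normal subgroup of $\overline H$ with abelian quotient. Translating back through $M$ (using $M \subseteq H \cap H^g$ and $M \triangleleft H$, so that the correspondence theorem applies) shows that $H \cap H^g$ is a finite-index normal subgroup of $H$ with $H/(H\cap H^g)$ abelian, which is exactly proximate normality.

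Finally, for the general step I would reduce modulo $N_{i+1}$. Since $N_{i+1} \triangleleft G$ and $N_{i+1} \subseteq H_{i+1} \subseteq H_i$, proximate normality of $H_{i+1}$ in $H_i$ is equivalent to that of $\overline{H_{i+1}} = HN_{i+1}/N_{i+1}$ in $\overline{H_i} = HN_i/N_{i+1}$, the defining conditions transferring verbatim under the correspondence of subgroups containing $N_{i+1}$. In $\overline G = G/N_{i+1}$ the image $\overline{N_i} = N_i/N_{i+1}$ is cyclic and normal, while $\overline{H_{i+1}} = \overline H$ and $\overline{H_i} = \overline H\,\overline{N_i}$, so the lemma applies and delivers exactly the step. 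The main obstacle I anticipate is the finite-index condition: normality and abelianness of the quotient follow cheaply from containment of the commutator subgroup in the stabilizer, but finite index genuinely requires the finiteness of $\mathrm{Aut}$ of a cyclic group, including the delicate infinite-cyclic case where the action factors through $\{\pm 1\}$.
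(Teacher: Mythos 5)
Your proof is correct, and it takes a genuinely different route from the paper. The paper does not touch the defining cyclic series at all: it passes to the Fitting subgroup $F$ and interpolates the chain $G \supset HF \supset H\gamma_1(F) \supset \cdots \supset H$ along the lower central series of $F$. There the finite index and abelianness of $H\gamma_i(F)/\bigl((H\cap F)\gamma_i(F)\bigr)$ come from the fact that $G/F$ is finite abelian, and the containment of $(H\cap F)\gamma_i(F)$ in every conjugate $(H\gamma_i(F))^g$ comes from the commutator identity $[F,\gamma_{i-1}(F)]=\gamma_i(F)$; so the whole argument leans on the structure theorem for the Fitting subgroup (Lemma~\ref{fitting-subgroup}). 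Your argument instead works directly with the invariant series $\{N_i\}$ and reduces everything to the single clean statement that $H$ is proximate normal in $HN$ whenever $N$ is cyclic normal, which you prove by identifying $\overline H\cap\overline H^{\,\overline n}$ with a stabilizer for the conjugation action of $\overline H$ on the cyclic group $\overline N$ and invoking the finiteness and commutativity of $\mathrm{Aut}(\overline N)$ (including the infinite cyclic case, where it is $\{\pm 1\}$). I checked the delicate points: $H^{hn}=H^n$, the identification of $(H\cap H^n)/M$ with $\mathrm{Stab}_{\overline H}(\overline n)$ via the correspondence over $M=H\cap N$, and the transfer of all three conditions of proximate normality through the quotient by $N_{i+1}$; all are sound. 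What your route buys is self-containedness (no Fitting subgroup needed) and a reusable local lemma; what the paper's route buys is a series whose length is governed by the nilpotency class of $F$ rather than the length of the chosen cyclic series, and a uniform source of abelianness from $G/F$. Either proof would serve.
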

\begin{proof}
Let following be the lower central series of Fitting subgroup $F$ of supersolvable group $G$.
\[
F = \gamma_0(F) \supset \gamma_1(F) \supset \cdots \supset \gamma_k(F) = [F, \gamma_{k-1}(F)] \supset \gamma_{k+1}(F) = (1)
\] 	
For a given subgroup $H$ of $G$, we consider the following chain of groups.
\[
G \supset HF \supset H\gamma_1(F) \supset \cdots \supset H\gamma_k(F) \supset H\gamma_{k+1}(F) = H \supset (1).
\]
To show that $H\gamma_i(F)$ is proximate normal in $H\gamma_{i-1}(F)$ for all $ 1 \leq i \leq k+1$, consider finite index normal subgroup $(H \cap F) \gamma_i(F)$ of $H \gamma_i(F)$. The groups $\gamma_i(F)$ are characteristic subgroup of normal subgroup $F$ of $G$, therefore are normal in $G$. It follows that $(H \cap F) \gamma_i(F)$ is normal in $H\gamma_{i+1}(F)$.

 By definition of $F$, we have $H \gamma_i(F)/((H \cap F) \gamma_i(F))$ is a finite abelian group. For all $g \in H\gamma_{i+1}(F)$, we have  $((H \cap F) \gamma_i(F))$ is a subgroup of $(H \gamma_i(F)) \cap (H \gamma_i(F))^g$. Therefore $(H \gamma_i(F)) \cap (H \gamma_i(F))^g$ is a finite index normal subgroup of $H$ for all $g \in H\gamma_{i+1}(F)$. For the case of subgroup $HF$ of $G$, result follows because $F \subseteq HF \subseteq G$ and $G/F$ is a finite abelian group.
\end{proof}	
%\begin{thm}
%\label{thm:subproximate-irreducible}
%Let $G$ be a supersolvable group. Let $H$ be a subgroup of $G$ and $W$ be an irreducible representation of $H$ such that $\ind_H^G(W)$ is Schur irreducible. Then $\ind_H^G(W)$ is an irreducible $G$-representation.
%\end{thm}
\begin{proof}[Proof of Theorem~\ref{thm: supersolvable-irreducible}] By Lemma~\ref{proximate-series}, every subgroup of a supersolvable group has a finite subproximate normal series. We use induction on minimum length of a subproximate normal series of $H$. In case $H$ is proximate normal subgroup of $G$, then result follows by Theorem~\ref{thm:proximate-schur}. Let minimum length of a subproximate normal series of $H$ is $k$. By induction, we assume that result is true for all subgroups with a subproximate normal series of minimum length strictly less than $k$. Let subproximate normal series of $H$ of length $k$ be given as below.
\[
G = H_k\supsetneq H_{k-1} \supsetneq \cdots \supsetneq H_0 = H. 	
\]
Then $\ind_H^{H_{1}}(\chi)$ is irreducible by Theorem~\ref{thm:proximate-schur}. Further $\ind_{H_{1}}^G(\ind_H^{H_{1}}(\chi))$ is isomorphic to $\ind_H^{G}(\chi)$; therefore is Schur irreducible. The group $H_1$ has a subproximate normal series of length $k-1$ and therefore result follows by induction.

\end{proof}

\section{Proof of Theorem~\ref{main-theorem}}
\label{sec: main-theorem}
In this section we prove the main theorem. \\
\begin{proof}[Proof of Theorem~\ref{main-theorem}]
Firstly we prove the necessary part. For given irreducible representations $(\pi, V)$ with finite weight, let $(H', \chi')$ be obtained by section~\ref{sec: existence}. Then $V_{H'}(\chi') \neq 0$ and $\ind_{H'}^G(\chi')$ is Schur irreducible, which infact is irreducible by Section~\ref{sec: irreducible}. Then $I(\mathrm{Ind}_{H'}^G(\chi'), \pi) \neq 0$. By Proposition~\ref{prop: reduction}, there exists a non-trivial $G$-linear map from $\mathrm{Ind}_{H'}^G(\chi')$ to $\pi$. But then both of these representations are irreducible and therefore $\mathrm{Ind}_{H'}^G(\chi') \cong \pi$. This prove that $\pi$ is monomial.

Conversely, Let $\pi \cong \mathrm{Ind}_{H'}^G(\chi')$ be a monomial irreducible representation
of $G$ acting on representation space $V$. We prove that
$V_{H'}(\chi')$ is in fact one
dimensional subspace of $V$ and thus proving that $(\pi, V)$ has finite weight. Let $\{g_i \mid i \in \mathbb I \}$ be a set of double coset
representatives of $H'$ in $G$. For each $i \in \mathbb I $, define
functions $f_i$ such that $f_i(g_j) = \delta_{i,j}$ and then
extended to the whole group $G$ so that $f_i(hg_j) =
\chi'(h)f_i(g_j)$ for all $j$. From the definition of
$\mathrm{Ind}_{H'}^G(\chi')$, it is clear that every $f \in V$ can be
written as linear combination of $f_i$'s. The representation $V$
is countable dimensional and irreducible; therefore, also Schur irreducible. By Schur's lemma and Theorem~\ref{thm: reduction-to-normalizer} for any $g \notin H'$, there exists $h \in H' \cap {H'}^g$ such that $\chi(h) \neq
\chi^g(h)$.  For any $f \in V$, we have the following. 
\[
\pi(h)f(g) = f(ghg^{-1}g) = \chi^g(h) f(g) \neq \chi(h)f(g),
\]
therefore $f \in V_{H'}(\chi')$ if and only if $f$ is non-zero only on
trivial coset representative of $H'$ in $G$ and here it is
determined by its value on identity element of $H'$. Therefore
$V_{H'}(\chi')$ is one dimensional.

\end{proof}

\section{Infinite Dihedral Group}
\label{sec: infinite-dihedral}
In this section we show that every irreducible representation of infinite Dihedral group $D_\infty$ is monomial. Recall $D_\infty$ is semidirect product of $\mathbb Z_2 = (s)$ with infinite cyclic group $(g)$, with $\mathbb Z_2$ action given by $sgs^{-1} = g^{-1}$.

\begin{thm}
Every irreducible representation of $D_\infty$ is monomial
\end{thm}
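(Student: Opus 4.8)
The plan is to prove that every irreducible representation of $D_\infty$ is monomial by applying the main theorem, Theorem~\ref{main-theorem}, so that it suffices to show that every irreducible representation of $D_\infty$ has finite weight. Since $D_\infty$ is supersolvable (Example~\ref{eg: infinite-dihedral}), this reduces the problem to exhibiting, for an arbitrary irreducible $(\pi, V)$, a subgroup $H$ and a character $\chi$ of $H$ with $V_H(\chi)$ nonzero and finite dimensional. Finite dimensional irreducibles are already handled by Proposition~\ref{prop: finite-dimensional}, so the real content is the infinite dimensional case.

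The concrete approach is to exploit the structure $D_\infty = \langle s \rangle \ltimes \langle g \rangle$ with $sgs^{-1} = g^{-1}$ and $s^2 = 1$. First I would restrict $\pi$ to the normal infinite cyclic subgroup $A = \langle g \rangle$, which is abelian. The operator $\pi(g)$ is a single invertible operator on $V$, and I want to find an eigenvector: that is, a character $\chi$ of $A$ (determined by a nonzero scalar $\chi(g) = \lambda \in \mathbb C^\times$) such that $V_A(\chi) = \{ v \in V \mid \pi(g) v = \lambda v \}$ is nonzero. Once such an eigenvector exists, $V_A(\chi)$ is automatically finite dimensional, since conjugation by $s$ sends the $\lambda$-eigenspace to the $\lambda^{-1}$-eigenspace, and in an irreducible representation the span of $V_A(\chi)$ and $V_A(\chi^s)$ under the group action must be all of $V$; controlling dimensions here gives that the weight space is at most two dimensional. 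Thus $\pi$ has finite weight with respect to $(A, \chi)$, and Theorem~\ref{main-theorem} finishes the argument.

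The main obstacle is the existence of an eigenvector for $\pi(g)$ on a possibly infinite dimensional, non-unitary, merely abstract complex vector space. On such a space a single invertible operator need not have any eigenvalue at all, so this step cannot be taken for granted and is where the genuine work lies. To handle it I would use the hypothesis that $V$ is a countable dimensional irreducible $D_\infty$-module: by irreducibility $V$ is a cyclic, hence countable dimensional, $\mathbb C[D_\infty]$-module, and Schur's lemma applies so that $\End_{D_\infty}(V) = \mathbb C$ (here one uses the Dixmier–Quillen form of Schur's lemma, valid because $\mathbb C$ is uncountable and $V$ is countable dimensional). The element $\pi(g)$ generates a commutative subalgebra, and one argues that $\pi(g)$ must have an eigenvalue: if $\pi(g) - \lambda$ were invertible for every $\lambda \in \mathbb C$, then the uncountably many operators $(\pi(g) - \lambda)^{-1}$ would be $\mathbb C$-linearly independent in the countable dimensional algebra generated by the $D_\infty$-action, a contradiction. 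This spectral argument produces the required $\lambda$, and hence the eigenvector.

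Having secured the weight vector, the remaining steps are routine. I would verify that $V_A(\chi)$ is finite dimensional by the eigenspace interchange argument above, invoke Proposition~\ref{prop: finite-dimensional} to dispatch the finite dimensional case separately, and then in the infinite dimensional case apply Theorem~\ref{main-theorem} to conclude that $\pi$ is monomial. One could alternatively make the construction fully explicit by identifying $\pi$ with $\ind_A^{D_\infty}(\chi)$ directly, but routing through Theorem~\ref{main-theorem} is cleaner and keeps the proof short.
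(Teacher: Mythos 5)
Your overall strategy (reduce to finite weight and invoke Theorem~\ref{main-theorem}) is legitimate, but the key step --- producing an eigenvector of $\pi(g)$ --- has a genuine gap. Your counting argument shows at best that $\pi(g)-\lambda$ fails to be \emph{invertible} for some $\lambda$ (and even that needs repair: the inverses $(\pi(g)-\lambda)^{-1}$ have no reason to lie in the image of $\mathbb C[D_\infty]$ in $\End(V)$, so you must apply them to a fixed nonzero vector and use countable-dimensionality of $V$ instead). Non-invertibility does not give a nonzero kernel: an operator on a countable-dimensional space can be injective and non-surjective with empty point spectrum, e.g.\ multiplication by $x$ on $\mathbb C[x]$. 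The standard Dixmier argument upgrades ``non-invertible'' to ``has an eigenvector'' only for operators that \emph{commute} with the irreducible action, since irreducibility then forces $\pi(g)-\lambda$ to be zero or bijective; but $\pi(g)$ does not commute with $\pi(s)$, so this dichotomy is unavailable. A correct way to rescue your route is to observe that $\pi(g)+\pi(g)^{-1}$ \emph{is} an intertwiner (since $s(g+g^{-1})s^{-1}=g+g^{-1}$), hence a scalar $c$ by the Dixmier--Schur lemma; then $\pi(g)^2-c\,\pi(g)+1=0$, so $\pi(g)$ satisfies a quadratic and has an eigenvalue. A secondary soft spot: your claim that $V_A(\chi)$ is ``automatically'' finite dimensional is circular as written, since $V=V_A(\chi)+V_A(\chi^s)$ only bounds $\dim V_A(\chi)$ by $\dim V$; you need the extra step that $s$ swaps the two eigenspaces, so $V\cong\ind_A^{D_\infty}(V_A(\chi))$ and irreducibility forces $V_A(\chi)$ to be an irreducible module over the commutative algebra $\mathbb C[A]$, hence one dimensional (with the case $\lambda=\lambda^{-1}$ handled separately).

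For comparison, the paper avoids all of this by a direct elementary argument: it fixes $v\neq 0$, sets $W=\mathrm{span}\{\pi(g)^iv\}$, and in each of the cases $sW\cap W=0$, $0\subsetneq sW\cap W\subsetneq W$, $sW=W$ exhibits an explicit proper $D_\infty$-invariant subspace (e.g.\ the ``coefficient-sum zero'' subspace $W_0$) whenever the $\pi(g)^iv$ are infinitely many independent vectors. This shows every irreducible representation of $D_\infty$ is finite dimensional, after which only Proposition~\ref{prop: finite-dimensional} is needed --- the heavy machinery of Theorem~\ref{main-theorem} is not used for this example. Once your eigenvector step is repaired via the central element $\pi(g)+\pi(g)^{-1}$, your route actually yields the sharper conclusion $\dim V\le 2$ directly, which is a nice alternative; but as written the proof does not go through.
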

\begin{proof}
For group $D_\infty$, we prove that every irreducible representation is finite dimensional and therefore by Proposition~\ref{prop: finite-dimensional}  is monomial. We prove this by contradiction.

%We can further assume that $\pi$ is faithful as otherwise $\mathrm{Ker}(\pi)$ is a non-trivial normal subgroup of $D_\infty$ and every such subgroup has finite index in $D_\infty$. Therefore $V$ must be finite dimensional in this case.  Below we prove that any faithful infinite dimensional representation of $D_\infty$ is not irreducible and therefore we conclude the result.
%Then it does not have a finite dimensional invariant subspace of $V$ invariant under $Z$. As in this case, we will have $V$ is finite dimensional.
Let $\pi: D_\infty \rightarrow \mathrm{Aut}(V)$ be an  infinite dimensional irreducible representation of $D_\infty$.
Consider non-zero $v \in V$, and let $W$ be the space generated by $g^i v$ for all $i \in \mathbb Z$. Now $V$ is irreducible, therefore $V = W + s(W)$. So we have either $sW = W$ (and therefore $V =W$) or $W \subsetneq V$. In the latter case, we have $s(g^i v) \notin W$ for any $i \in \mathbb Z$ as otherwise $sv \in W$.

{\bf Case 1: $\bf {sW \cap W = (0)}$} For this case, we first note that the sets $\{ g^iv\}_{i \in \mathbb Z}$ and $\{sg^i v\}_{i \in \mathbb Z}$ form basis of $W$ and $sW$ respectively. Consider the proper subspace $W_0$ of $W$ defined as below.
\begin{eqnarray}
\label{eq:w0}
W_0 = \{ \sum_{i} a_i(g^iv) \in W \mid \sum_i a_i = 0 \}
\end{eqnarray}
Then $W_0 + s(W_0)$ is a proper $D_\infty$-invariant subspace of $V$ and this contradicts irreducibility of $V$.

{\bf Case 2: ${\bf 0 \subsetneq sW \cap W \subsetneq W} $ } In this case, there exists non-zero $w, w' \in W$ such that $w = sw'$. Consider $W_1$, subspace of $W$, generated by $g^i w$ for all $i$. Then $W_1 + sW_1$ is a a $D_\infty$-invariant subspace of $sW \cap W$ and therefore proper subspace of $V$ contracting the irreducibility of $V$.

{\bf Case 3: $\bf{sW = W}$} For this case, we denote $g^i v$ by $v_i$ for all $i \in \mathbb Z$. Then $s v_i = s(g^{i} v_0) = g^{-i} s(v_0)$. Therefore action of $s$ on $W$ is determined by its action on $v_0$. Let $s(v_0) = \sum_{j = r}^t a_j v_j$ such that $ r \leq t$ and $a_{r}$, $a_t$ are non-zero. Then we have,
\[
s(s(v_0)) = s( \sum_{j = r}^t a_j v_j) = \sum_{j = r}^t a_j (\sum_{l = r}^ts a_l v_{l+j})
\]
By concentrating on the smallest and largest index, we obtain
\[
s(s (v_0)) = a_{r}^2 v_{2r}+ \cdots + a_t^2 v_{2t},
\]
with the condition that middle expression does not involve $v_{2r}$ and $v_{2t}$. But we have $s(s(v_0)) = v_0 $, therefore $ r = t = 0$ and $a_r^2 = 1$. So $s(v_0) =  \pm v_0$ and this determines $s$. Again let $W_0$ be a proper subspace of $W$ as defined in (\ref{eq:w0}). Then $W_0$ is a proper $D_\infty$-invariant subspace of $W$, again contradicting irreducibility of $V$. This implies that every irreducible representation of $D_\infty$ is finite dimensional.
\end{proof}

{\bf Acknowledgment.} This work is supported in part by UGC Centre
for Advanced Studies. The second named author thanks Arvind Ayyer, Nutan Limaye, and Amritanshu Prasad for their encouragement.

\bibliography{h}{}
\bibliographystyle{siam}

\end{document}